\pdfoutput=1

\documentclass{article}
\usepackage{natbib}

\usepackage{microtype}
\usepackage{graphicx}
\usepackage{booktabs} %
\usepackage{pgfplots}
\pgfplotsset{compat=1.17, filter discard warning=false}
\usepgfplotslibrary{groupplots}
\usepackage[hypertexnames=false]{hyperref}
\usepackage{xspace}

\usepackage{algorithm}
\usepackage{algorithmic}

\usepackage[preprint]{tmlr}
\AtBeginDocument{\hypersetup{
  pdftitle={Variable Metric Three Operator Splitting},
  pdfauthor={Hansol Suh, Tobin Isaac}}%
  \renewcommand{\headrulewidth}{0pt}}
\usepackage{amsmath}
\usepackage{amssymb}
\usepackage{mathtools}
\usepackage{amsthm}

\usepackage[capitalize,noabbrev]{cleveref}

\theoremstyle{plain}
\newtheorem{theorem}{Theorem}[section]
\newtheorem{proposition}[theorem]{Proposition}
\newtheorem{lemma}[theorem]{Lemma}
\newtheorem{corollary}[theorem]{Corollary}
\theoremstyle{definition}

\theoremstyle{remark}
\newtheorem{remark}[theorem]{Remark}

\DeclareMathOperator{\prox}{prox}

\DeclareMathOperator{\minimize}{minimize}
\DeclareMathOperator{\dom}{dom}

\newcommand{\bs}{\boldsymbol}
\newcommand{\va}{\boldsymbol{a}}
\newcommand{\bb}{\boldsymbol{b}}
\newcommand{\mm}{\boldsymbol{m}}
\newcommand{\xx}{\boldsymbol{x}}
\newcommand{\uu}{\boldsymbol{u}}
\newcommand{\vv}{\boldsymbol{v}}
\newcommand{\vs}{\boldsymbol{s}}
\newcommand{\zz}{\boldsymbol{z}}
\newcommand{\yy}{\boldsymbol{y}}
\newcommand{\MM}{\boldsymbol{M}}
\newcommand{\NN}{\boldsymbol{N}}
\newcommand{\DD}{\boldsymbol{D}}
\newcommand{\II}{\boldsymbol{I}}

\newcommand{\LL}{\mathcal{L}}
\newcommand{\SPD}{\mathbb{S}_{+\!+}^n}
\newcommand{\vmtos}{\cref{alg:vmtos}\xspace}
\newcommand{\Vmtos}{\Cref{alg:vmtos}\xspace}

\DeclareMathOperator*{\argmin}{arg\,min}

\newcommand{\algorithmicbreak}{\textbf{break}}
\newcommand{\BREAK}{\STATE \algorithmicbreak}

\usepackage[disable,textsize=tiny]{todonotes}

\title{Variable Metric Three Operator Splitting}

\date{}

\author{\name Hansol Suh \email hsuh@anl.gov \\
      \addr Mathematics and Computer Science Division \\
      Argonne National Laboratory, Lemont, IL, USA \\[2ex]
      \name Tobin Isaac \email toby.isaac@gmail.com \\
      \addr NVIDIA, Santa Clara, CA, USA}

\begin{document}

\maketitle

\begin{abstract}
    To minimize the sum $f + g + h$ of three convex functions when $f$ is differentiable with a Lipschitz-continuous gradient,
    we present a proximal algorithm, \emph{variable-metric three operator splitting} (VMTOS), a generalization of the three operator splitting (TOS) scheme of Davis and Yin.
    We establish convergence guarantees by controlling changes in the metric.
    The smallest saddle-point residual among the first $k$ primal-dual iterates is $\tilde{O}(k^{-1/2})$.
    When $f$ and the convex conjugate $h^*$ are strongly convex and the accepted metrics and their inverses are uniformly bounded, the rate improves to $\tilde{O}(\beta^k)$ for some $\beta \in (0,1)$.
    We develop a practical Barzilai--Borwein update for diagonal metrics that satisfies our bound on metric changes.
    Under the same strong convexity assumptions, this update guarantees the required uniform metric bounds, yielding R-linear convergence.
    Numerical experiments on benchmark optimization problems show that VMTOS is competitive with other proximal algorithms.
\end{abstract}

\section{Introduction}
\label{sec:introduction}

We consider the problem
\begin{equation}
\minimize_{\xx\in\mathbb{R}^n}\, f(\xx) + g(\xx) + h(\xx),
\label{eq:minimize}
\end{equation}
where $f:\mathbb{R}^n\to\mathbb{R}$ is differentiable and convex, and $g,h:\mathbb{R}^n\to(-\infty,+\infty]$ are proper, lower-semicontinuous convex functions.
This problem appears in the optimization of parametric models, where
$f(\xx)$ is the misfit or loss of a smooth model parameterized by $\xx$, and $g(\xx)$ and $h(\xx)$ represent some combination of nonsmooth penalties and constraints placed on $\xx$.

The three operator splitting (TOS) scheme of \citet{davis2017three} is a proximal algorithm that optimizes \eqref{eq:minimize} when $\nabla f$ is Lipschitz continuous,
but the Lipschitz constant $L$ must be known to choose the step size.
\citet{pedregosa2018adaptive} developed two adaptive variants of TOS that do not need $L$ to be known.
The variant with nonincreasing step sizes converges like the original scheme, but the variant that allows increasing step sizes is only shown to converge when $h$ is Lipschitz continuous with a known constant.

We present \emph{variable-metric three operator splitting} (VMTOS, \vmtos), a variant of TOS that uses, instead of a scalar step size, a variable metric
$\|\xx \|_{\MM_k} := \sqrt{\langle \MM_k \xx, \xx \rangle}$,
based on a different symmetric positive definite (s.p.d.) matrix $\MM_k$ in each iteration.

\begin{algorithm}
\caption{Variable Metric Three Operator Splitting Scheme}
\label{alg:vmtos}
\begin{algorithmic}[1]
\STATE{\bfseries Input:} 
     $\zz_0, \uu_0 \in \mathbb{R}^n$, 
       $\widehat\MM_0 \in \SPD$,
       $\rho \geq 0$,
       $\sigma \in (0,1)$,
       $\tau > 1$

\FOR{$k = 0, 1, 2, \dots$}
\STATE $\MM_k \gets \widehat\MM_k$ \hfill\COMMENT{initialize trial metric}
\LOOP \label{line:loop}
    \STATE $\xx_{k+1} \gets \argmin_{\xx}\, g(\xx) + \frac{1}{2} \| \zz_k - \MM_k^{-1}(\uu_k + \nabla f(\zz_k)) - \xx \|_{\MM_k}^2$\label{line:prox-g} \hfill\COMMENT{$g$ subproblem}
    \IF{$f(\xx_{k+1}) \leq  f(\zz_k) + \langle \nabla f(\zz_k), \xx_{k+1} - \zz_k \rangle + \frac{\sigma}{2}\|\xx_{k+1} - \zz_k\|_{\MM_k}^2$\label{line:sufficient-decrease}}
    \BREAK \hfill\COMMENT{sufficient decrease}
    \ELSE
    \STATE $\MM_k \gets \tau \MM_k$\label{line:strengthen-metric} \hfill\COMMENT{backtracking}
    \ENDIF
\ENDLOOP
\STATE     $\zz_{k+1} \gets \argmin_{\zz}\, h(\zz) + \frac{1}{2}\|\xx_{k+1} + \MM_k^{-1} \uu_k - \zz \|_{\MM_k}^2$\label{line:prox-h} \hfill\COMMENT{$h$ subproblem}

\STATE     $\uu_{k+1} \gets \uu_k + \MM_k(\xx_{k+1} - \zz_{k+1})$\label{line:dual-update}
\hfill \COMMENT{dual update}

\STATE     Set $d_{k+1} := 1 + \frac{\rho}{(k+1)(1 +\log(k+1))}$
\STATE     Choose $\widehat\MM_{k+1}$ such that $d_{k+1}^{-1} \MM_k \preceq \widehat\MM_{k+1} \preceq d_{k+1} \MM_k$\label{line:adapt-metric}
    \hfill \COMMENT{metric selection}
\ENDFOR
\end{algorithmic}
\end{algorithm}

\subsection{Our contribution}

In \cref{sec:convergence}, we analyze the convergence of the sequence
$\{(\xx_k, \uu_k)\}_{k=1}^\infty$ generated by \vmtos to a saddle point
of the Lagrangian $\LL(\xx, \uu)$ of the original minimization problem \eqref{eq:minimize}.
As a merit function for a pair $(\xx_k, \uu_k)$, we use the residuals of the subgradients $(\partial_{\xx} \LL(\xx_k, \uu_k), \partial_{\uu} (-\LL(\xx_k, \uu_k)))$, which contain zero only if $(\xx_k, \uu_k)$ is a saddle point of the Lagrangian.
Under conditions that we detail in \cref{thm:pointwise-convergence},
$\{(\xx_k, \uu_k)\}_{k=1}^\infty$ contains a subsequence whose residuals converge to zero at a rate of $\tilde{O}(k^{-1/2})$ (that is, within a polylogarithmic factor of $O(k^{-1/2})$).
In \cref{cor:quasilinear}, we show that when $f$ and $h^*$ are strongly convex and the accepted metrics are uniformly bounded and uniformly positive definite,
$\{(\xx_k, \uu_k)\}_{k=1}^\infty$ converges to the unique saddle point at a rate of $O((\log k)^\rho\beta^k)$ for some $\beta\in(0,1)$.
Under the same strong convexity assumptions, \cref{cor:practical-bb-linear} establishes R-linear convergence for \vmtos using the practical diagonal BB rule of \cref{sec:diagvm}.
Its proof uses \cref{lem:practical-bb-bounds}, which shows that this rule guarantees the required uniform metric bounds.

\Vmtos is not just a variable metric generalization of the adaptive algorithm of \citet{pedregosa2018adaptive}.  In their adaptive three operator splitting (ATOS) algorithm, a sufficient decrease condition is used that is equivalent to line \ref{line:sufficient-decrease} of \vmtos with $\sigma = 1$.  By choosing a fixed $\sigma<1$ and controlling consecutive metric proposals, we allow the metric to decrease (equivalent to allowing the step size to increase) without requiring $h$ to be Lipschitz.  Our general convex result controls best-iterate saddle-point residuals, whereas their sublinear analysis controls an ergodic Lagrangian gap.

Our analysis also draws on the work of \citet{GoncalvesEtAl2018} in proving the convergence of variable metric hybrid proximal extragradient iterations.  In their work it is assumed that the distances between consecutive metrics are \emph{summable}: that there exists $C_s < \infty$ and $\{c_k\}_{k=1}^\infty$ such that $\sum_k c_k \leq C_s$ and such that
\begin{equation}
(1 + c_{k+1})^{-1} \MM_k \preceq \MM_{k+1} \preceq (1 + c_{k+1}) \MM_k,
\label{eq:summable-differences}
\end{equation}
where $\NN \preceq \MM$ means $\MM - \NN$ is symmetric positive semidefinite.
This choice implies that $\{\MM_k\}$ stays within a compact neighborhood of $\MM_0$ and $\{\MM_k^{-1}\}$ within a compact neighborhood of $\MM_0^{-1}$.  For $\rho>0$, our proposal bound in line \ref{line:adapt-metric} of \vmtos uses $c_k = \rho / (k(1 + \log k))$, which is not a summable sequence,
but allows $\MM_k$ to become arbitrarily distant from $\MM_0$.  Backtracking introduces additional metric changes, which are accounted for in \cref{thm:metric-distance}.  Our general rate bounds include a polylogarithmic factor relative to similar bounds proved by Gon{\c{c}}alves et al.  We discuss the relationship between our bound on metric proposals and Barzilai--Borwein-type metric selection techniques in \cref{sec:metric-selection}.

\subsection{Related work}

In addition to the work of \citet{davis2017three} and \citet{pedregosa2018adaptive} on three operator splitting algorithms
and \citet{GoncalvesEtAl2018} on variable-metric ADMM, our work draws on ideas from other variable metric proximal algorithms.
In the variable-metric context, the proximal operator is defined to be
\begin{equation}\label{eq:prox-M-def}
\prox_{\MM,g}(\yy) := \argmin_{\xx}\ g(\xx) + \frac{1}{2}\|\xx - \yy\|_{\MM}^2,
\end{equation}
where $\MM$ is an s.p.d.\ matrix.
When used to modify the proximal gradient algorithm for $f(\xx)+g(\xx)$, the resulting approach is called variable-metric proximal gradient (VMPG).

Several VMPG variants have been proposed, using different update rules for $\MM_k$, for both convex  \citep{chouzenoux2014variable,salzo2017variable,lee2012proximal} and non-convex cases \citep{bonettini2016variable}.
Majorization-minimization criteria can be used to constrain the metric selection when the particular objective functions permit it \citep{chouzenoux2014variable}, but in the general case it is not always possible to guarantee that a metric defines a quadratic majorant.

\citet{park2020variable} note that fully variable metric methods (when $\MM$ may be any matrix in $\SPD$) fall short in practice because of the expense of dense matrix operations and the lack of directly computable proximal operators for general function/metric pairs \citep{lee2012proximal}.
They use diagonal variable metrics (as did \citealp{parente2008class}), which are compatible with a wide class of objective functions, 
and which can be inexpensively updated.
They give details for implementing a Barzilai--Borwein \citep{barzilai1988two} update condition entry-by-entry on the diagonal.
We will extend that approach to include our constraints on metric selection in \cref{sec:metric-selection}.

\section{Background, notation, and assumptions}\label{sec:background}

Unsubscripted vector norms are Euclidean, unsubscripted matrix norms are spectral, and $\|\cdot\|_F$ denotes the Frobenius norm.  We write $\mathbb{S}^n$ for the real symmetric matrices and $\SPD$ for the positive definite ones.

\paragraph{Notation for convex functions}

We use the following standard notation from the convex analysis literature \citep[see, e.g.,][]{Bauschke2017}.
\begin{itemize}\setlength\itemsep{0em}

    \item $\Gamma_0(\mathbb{R}^n)$: the class of closed, proper, convex, extended-valued functions on $\mathbb{R}^n$.
    In subsequent definitions it is assumed $\varphi \in \Gamma_0(\mathbb{R}^n)$.
    
    \item $\dom \varphi$: the effective domain of $\varphi$.
    
    \item $\varphi^*$: the convex conjugate of $\varphi$,
    which satisfies
\begin{equation}
\varphi^* \in \Gamma_0(\mathbb{R}^n), \quad \varphi^{**} = \varphi.\label{eq:double-conjugate}
\end{equation}

    \item  $\partial \varphi:\mathbb{R}^n \rightrightarrows \mathbb{R}^n$: the subdifferential of $\varphi$, a monotone operator,
\begin{equation}\label{eq:monotone}
    \langle \uu - \vv, \xx - \yy \rangle \geq 0 \quad\text{for all }\uu \in \partial \varphi(\xx), \vv \in \partial \varphi(\yy).
\end{equation}
    The subgradients of convex conjugates are inverses,
\begin{equation}\label{eq:subdiff-inverse}
    \uu \in \partial \varphi(\xx) \Leftrightarrow \xx \in \partial \varphi^*(\uu).
\end{equation}
By the optimality conditions of \eqref{eq:prox-M-def}, 
\begin{equation}\label{eq:prox-to-subdiff}
    \xx = \prox_{\MM,\varphi}(\yy) \Leftrightarrow \MM(\yy - \xx) \in \partial \varphi(\xx).
\end{equation}

\end{itemize}

\paragraph{Duality and the Lagrangian}

We let $P(\xx) := f(\xx) + g(\xx) + h(\xx)$ be the objective function of \eqref{eq:minimize}.
Let $\Omega:=\dom g\times\dom h^*$.  For the perturbation function $F(\xx,\zz) := f(\xx) + g(\xx) + h(\xx + \zz)$, define the finite-valued Lagrangian $\LL:\Omega\to\mathbb{R}$ by
\begin{equation}
\begin{split}
    \LL(\xx,\uu) :&= \inf_{\zz\in\mathbb{R}^n}\, F(\xx,\zz) - \langle \uu, \zz \rangle \\
    &= f(\xx) + g(\xx) + \langle \uu, \xx \rangle - h^*(\uu).\label{eq:lagrangian}
\end{split}
\end{equation}
For $(\xx,\uu)\in\Omega$, the sections $\LL(\cdot,\uu)$ and $-\LL(\xx,\cdot)$ are extended by $+\infty$ outside $\dom g$ and $\dom h^*$, respectively.
The dual maximization objective is
\begin{equation}
   D(\uu) := -(f+g)^*(-\uu) - h^*(\uu).
\end{equation}
For $\uu\in\dom h^*$, $D(\uu)=\inf_{\xx\in\dom g}\LL(\xx,\uu)$.
We denote by $S(\LL)$ the set of saddle points of $\LL$,
\begin{equation}\label{eq:saddle-point}
    S(\LL) := \bigl\{(\xx^\star, \uu^\star)\in\Omega :
    \LL(\xx^\star, \uu) \leq \LL(\xx^\star, \uu^\star) \leq \LL(\xx, \uu^\star)
    \text{ for all } (\xx, \uu) \in \Omega\bigr\}.
\end{equation}

The Lagrangian is convex-concave on $\Omega$, and a pair $(\xx^\star, \uu^\star)\in\Omega$ is a saddle point if and only if
\begin{align}
    0 &\in \partial_{\xx} \LL(\xx^\star, \uu^\star) = \nabla f(\xx^\star) + \partial g(\xx^\star) + \uu^\star,\label{eq:dLdx} \\
    0 &\in \partial_{\uu} (-\LL(\xx^\star, \uu^\star)) = \partial h^*(\uu^\star) - \xx^\star.\label{eq:dLdu}
\end{align}

The distance of the zero vector from these subdifferentials is a measure of the suboptimality of a potential saddle point $(\xx,\uu)\in\Omega$,
\begin{equation}\label{eq:residual-def}
    r(\xx, \uu) := \inf_{\substack{\vv \in \partial_{\xx} \LL(\xx,\uu), \\ \yy \in \partial_{\uu}(-\LL(\xx, \uu))}}\,\sqrt{\|\vv\|^2 + \|\yy\|^2}.
\end{equation}

\paragraph{Regularity assumptions}

We assume the functions in \eqref{eq:minimize} are closed, proper convex functions,
\begin{equation}
f, g, h \in \Gamma_0(\mathbb{R}^n),\tag{A1}\label{eq:proper}
\end{equation}
that $f:\mathbb{R}^n\to\mathbb{R}$ is differentiable with $L$-Lipschitz gradient (or $L$-smooth) for some $L\geq0$,
\begin{equation}
\|\nabla f (\xx) - \nabla f(\yy)\| \leq L \|\xx - \yy\| \quad \text{for all }\xx, \yy \in \mathbb{R}^n,
\tag{A2}\label{eq:L-lipschitz}
\end{equation}
and that at least one saddle point exists,
\begin{equation}
    S(\LL) \neq \emptyset.\tag{A3}\label{eq:solutions-exist}
\end{equation}
Under these assumptions, \eqref{eq:double-conjugate} and \eqref{eq:lagrangian} imply
$P(\xx) = \sup_{\uu\in\dom h^*} \LL(\xx,\uu)$ for $\xx\in\dom g$.  Therefore
$(\xx^\star, \uu^\star) \in S(\LL)$ implies by definition \eqref{eq:saddle-point}
\begin{equation}
    P(\xx^\star) = \inf_{\xx\in\dom g} \LL(\xx, \uu^\star) = \sup_{\uu\in\dom h^*} \LL(\xx^\star, \uu) = D(\uu^\star),
\end{equation}
which means that strong duality holds and that $\xx^\star$ and $\uu^\star$ are optimizers of the primal and dual problems.
There are several regularity conditions that can be verified \textit{a~priori} that imply \eqref{eq:solutions-exist} \citep{Bot2011}: for instance, if the relative interiors of $\dom(f+g) - \dom h$ and $\dom(f+g)^* + \dom h^*$ both contain the zero vector.

\section{Convergence}\label{sec:convergence}

\paragraph{Behavior within one iteration}

First, we can be certain that the backtracking loop on 
lines \ref{line:loop}--\ref{line:strengthen-metric} exits after a finite number of iterations.

\begin{proposition}\label{prop:backtrack}
    Let \eqref{eq:proper} and \eqref{eq:L-lipschitz} hold.  Then in each iteration of \vmtos the sufficient decrease condition on line \ref{line:sufficient-decrease} is satisfied after
    a finite number of iterations of the backtracking loop on line \ref{line:loop}.
\end{proposition}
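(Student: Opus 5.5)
The plan is to use the descent lemma from \eqref{eq:L-lipschitz} and to show that after enough backtracking steps the metric dominates $L\II/\sigma$. Fix an iteration $k$. After $j$ failed tests the trial metric is $\MM_k = \tau^j \widehat\MM_k$. Since $\widehat\MM_k \in \SPD$, its smallest eigenvalue $\lambda_{\min}(\widehat\MM_k)$ is positive. Because $\tau>1$, we have $\tau^j\lambda_{\min}(\widehat\MM_k)\to\infty$. So there is a finite $j^\star$ with $\sigma\tau^{j}\lambda_{\min}(\widehat\MM_k) \ge L$ for all $j\ge j^\star$, that is, $\sigma \MM_k \succeq L\II$.

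The test on line \ref{line:sufficient-decrease} evaluates $f$ and $\nabla f$ only at finite points. This is fine because $f:\mathbb{R}^n\to\mathbb{R}$ is finite everywhere. The candidate $\xx_{k+1}$ is well defined for every trial metric. Indeed, the objective on line \ref{line:prox-g} is $g$ plus a strongly convex quadratic. Since $g\in\Gamma_0(\mathbb{R}^n)$ by \eqref{eq:proper}, the sum is proper, closed and strongly convex, so it has a unique minimizer. Equivalently, $\xx_{k+1} = \prox_{\MM_k,g}(\cdot)$ as in \eqref{eq:prox-M-def}.

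Next, apply the standard descent lemma implied by \eqref{eq:L-lipschitz}:
\begin{equation*}
f(\xx) \le f(\zz_k) + \langle \nabla f(\zz_k), \xx - \zz_k\rangle + \tfrac{L}{2}\|\xx-\zz_k\|^2 \quad\text{for all } \xx\in\mathbb{R}^n.
\end{equation*}
Once $j\ge j^\star$, we have $L\|\xx-\zz_k\|^2 \le \sigma\|\xx-\zz_k\|_{\MM_k}^2$, whatever point $\xx_{k+1}$ the $g$ subproblem returns. Hence the sufficient decrease inequality holds at $\xx=\xx_{k+1}$, and the loop breaks after at most $j^\star+1$ tests.

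There is no real obstacle here. The one subtle point is that $\xx_{k+1}$ changes with $\MM_k$. The argument avoids this by using a bound that is uniform in $\xx$, so the dependence does not matter. If $L=0$, the condition holds immediately, with $j^\star=0$.
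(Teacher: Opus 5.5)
Your proof is correct and matches the paper's argument: the descent lemma from \eqref{eq:L-lipschitz} shows the test on line \ref{line:sufficient-decrease} passes once $\sigma\tau^{j}\lambda_{\min}(\widehat\MM_k)\geq L$, whatever point $\xx_{k+1}$ the $g$ subproblem returns. This yields the same bound $\max\{0,\lceil\log_{\tau}(L/(\sigma\lambda_{\min}(\widehat\MM_k)))\rceil\}$ on the number of backtracks, and your check that the $g$ subproblem is well defined is a harmless addition that the paper leaves implicit.
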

\begin{proof} See \cref{proof:backtrack}. \end{proof}
We can rephrase the assignments after the conclusion of the backtracking loop in proximal notation,
giving us the following equations for every $k$:
\begin{align}
    \xx_{k+1} &= \prox_{\MM_k,g}(\zz_k - \MM_k^{-1}(\uu_k +  \nabla f(\zz_k))), \label{eq:tos-iter-prox-g}\\
    \zz_{k+1} &= \prox_{\MM_k,h}(\xx_{k+1} + \MM_k^{-1}\uu_k), \label{eq:tos-iter-prox-h} \\
    \uu_{k+1} &= \uu_k + \MM_k (\xx_{k+1} - \zz_{k+1}). \label{eq:tos-iter-dual-update}
\end{align}

First we note that these equations show that a bound on $r(\xx_{k+1},\uu_{k+1})$ can be computed in each iteration.
\begin{proposition}
Let (\ref{eq:proper},\ref{eq:L-lipschitz}) hold.  Then after iteration $k$ of \vmtos
\begin{equation}\label{eq:first-r-bound}
\begin{split}
&r(\xx_{k+1},\uu_{k+1})^2 \leq \\
&\| \MM_k(\zz_k - \zz_{k+1}) + \nabla f(\xx_{k+1}) - \nabla f(\zz_k)\|^2 + \\
&\|\MM_k^{-1}(\uu_k - \uu_{k+1})\|^2.
\end{split}
\end{equation}
\end{proposition}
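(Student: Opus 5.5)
We will exhibit explicit elements $\vv \in \partial_{\xx}\LL(\xx_{k+1},\uu_{k+1})$ and $\yy \in \partial_{\uu}(-\LL(\xx_{k+1},\uu_{k+1}))$ equal to the two vectors in \eqref{eq:first-r-bound}. Since $r$ in \eqref{eq:residual-def} is an infimum over such pairs, this gives the bound. In doing so we also show $(\xx_{k+1},\uu_{k+1})\in\Omega$, so that $r$ is defined.

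\emph{The $\xx$ residual.} Apply \eqref{eq:prox-to-subdiff} to \eqref{eq:tos-iter-prox-g}:
\begin{equation*}
\MM_k(\zz_k - \xx_{k+1}) - \uu_k - \nabla f(\zz_k) \in \partial g(\xx_{k+1}),
\end{equation*}
so in particular $\xx_{k+1}\in\dom g$. By \eqref{eq:dLdx}, adding $\nabla f(\xx_{k+1}) + \uu_{k+1}$ gives an element of $\partial_{\xx}\LL(\xx_{k+1},\uu_{k+1})$. Eliminate $\uu_{k+1}$ with the dual update \eqref{eq:tos-iter-dual-update}, $\uu_{k+1}-\uu_k = \MM_k(\xx_{k+1}-\zz_{k+1})$. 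The $\MM_k\xx_{k+1}$ terms then cancel, leaving
\begin{equation*}
\vv = \MM_k(\zz_k - \zz_{k+1}) + \nabla f(\xx_{k+1}) - \nabla f(\zz_k).
\end{equation*}

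\emph{The $\uu$ residual.} Apply \eqref{eq:prox-to-subdiff} to \eqref{eq:tos-iter-prox-h}:
\begin{equation*}
\MM_k(\xx_{k+1} - \zz_{k+1}) + \uu_k \in \partial h(\zz_{k+1}).
\end{equation*}
By \eqref{eq:tos-iter-dual-update}, the left side is exactly $\uu_{k+1}$, so $\uu_{k+1}\in\partial h(\zz_{k+1})$. By \eqref{eq:subdiff-inverse}, $\zz_{k+1}\in\partial h^*(\uu_{k+1})$, and in particular $\uu_{k+1}\in\dom h^*$. Then by \eqref{eq:dLdu}, $\yy = \zz_{k+1}-\xx_{k+1} \in \partial_{\uu}(-\LL(\xx_{k+1},\uu_{k+1}))$. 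The dual update also gives $\zz_{k+1}-\xx_{k+1} = \MM_k^{-1}(\uu_k-\uu_{k+1})$, which is the second term in \eqref{eq:first-r-bound}.

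\emph{The subdifferential formulas.} The only point needing care is the sum rule in \eqref{eq:dLdx} and the identification in \eqref{eq:dLdu}, both taken at a non-saddle point. For the $\xx$-section, $f$ is finite and differentiable everywhere, so $\partial(f+g+\langle\uu,\cdot\rangle) = \nabla f + \partial g + \uu$. For the $\uu$-section, $-\LL(\xx,\cdot) = h^* - \langle\cdot,\xx\rangle$ plus a constant, so its subdifferential is $\partial h^* - \xx$. In any case we only need the inclusion "$\supseteq$", which is elementary.

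Squaring and summing the norms of $\vv$ and $\yy$ completes the proof.
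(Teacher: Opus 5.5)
Your proposal is correct and is essentially the paper's argument (given in the proof of \cref{lem:residual-bound}). There, too, the prox optimality conditions \eqref{eq:prox-to-subdiff} for the two subproblems, together with the dual update \eqref{eq:tos-iter-dual-update} and \eqref{eq:subdiff-inverse}, exhibit $\MM_k(\zz_k-\zz_{k+1})+\nabla f(\xx_{k+1})-\nabla f(\zz_k)\in\partial_{\xx}\LL(\xx_{k+1},\uu_{k+1})$ and $\MM_k^{-1}(\uu_k-\uu_{k+1})\in\partial_{\uu}(-\LL(\xx_{k+1},\uu_{k+1}))$, and the infimum in \eqref{eq:residual-def} is then bounded by their norms; your remarks on membership in $\Omega$ and on the subdifferential formulas away from saddle points are details the paper leaves implicit.
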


\begin{proof} Application of \eqref{eq:prox-to-subdiff} to \eqref{eq:tos-iter-prox-g} and \eqref{eq:tos-iter-prox-h}, algebraic manipulation, and the subgradient relationship \eqref{eq:subdiff-inverse}. See \cref{lem:residual-bound} for details. \end{proof}

Next we note that equations \eqref{eq:tos-iter-prox-g}--\eqref{eq:tos-iter-dual-update} and the sufficient decrease condition guarantee progress towards a saddle point in $S(\LL)$ in the $\MM_k$ norm in each iteration.

\begin{lemma}\label{lem:keyrecursive}
Let assumptions (\ref{eq:proper},\ref{eq:L-lipschitz},\ref{eq:solutions-exist}) hold.
Let $f$ be $\mu_f$-strongly convex and $h^*$ be $\mu_h$-strongly convex (where $\mu_f$ and $\mu_h$ may be zero). 
Then after iteration $k$ of \vmtos for every $(\xx,\uu)\in\Omega$ we have
\begin{equation}
\begin{split}
&2(\mathcal{L}(\xx_{k+1},\uu)-\mathcal{L}(\xx,\uu_{k+1}))
+(1 - \sigma) \|\xx_{k+1} - \zz_k\|_{\MM_k}^2\\
&+ \mu_f \|\zz_k-\xx\|_2^2 + \mu_h \|\uu_{k+1} - \uu\|_2^2 \\
& \leq  \|\zz_k-\xx\|_{\MM_k}^2  +\|\uu_k-\uu\|_{\MM_{k}^{-1}}^2 \\
& - \|\zz_{k+1}-\xx\|_{\MM_{k}}^2 - \|\uu_{k+1} - \uu\|_{\MM_k^{-1}}^2.\label{eq:key-recursive}
\end{split}
\end{equation}
\end{lemma}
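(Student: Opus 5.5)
The plan is to derive \eqref{eq:key-recursive} from three ingredients: the optimality conditions of the two proximal steps, the sufficient decrease condition accepted on line \ref{line:sufficient-decrease}, and (strong) convexity. The inner products that remain are then converted into differences of squared norms using the dual update \eqref{eq:tos-iter-dual-update} and the polarization identity. Write $\MM=\MM_k$ throughout.

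\emph{Step 1: subgradients.} Applying \eqref{eq:prox-to-subdiff} to \eqref{eq:tos-iter-prox-g} gives
\begin{equation*}
\vv_g:=\MM(\zz_k-\xx_{k+1})-\uu_k-\nabla f(\zz_k)\in\partial g(\xx_{k+1}).
\end{equation*}
Applying it to \eqref{eq:tos-iter-prox-h} gives $\MM(\xx_{k+1}-\zz_{k+1})+\uu_k\in\partial h(\zz_{k+1})$. By \eqref{eq:tos-iter-dual-update}, this vector is exactly $\uu_{k+1}$. Hence, by \eqref{eq:subdiff-inverse}, $\zz_{k+1}\in\partial h^*(\uu_{k+1})$; in particular $\uu_{k+1}\in\dom h^*$ and $\xx_{k+1}\in\dom g$, so the left side of \eqref{eq:key-recursive} is well defined.

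\emph{Step 2: bound the Lagrangian gap.} Expand $\LL(\xx_{k+1},\uu)-\LL(\xx,\uu_{k+1})$ using \eqref{eq:lagrangian} and bound each piece as follows.
\begin{itemize}
\item $g(\xx_{k+1})-g(\xx)\le\langle\vv_g,\xx_{k+1}-\xx\rangle$.
\item $h^*(\uu_{k+1})-h^*(\uu)\le\langle\zz_{k+1},\uu_{k+1}-\uu\rangle-\tfrac{\mu_h}{2}\|\uu_{k+1}-\uu\|^2$, by strong convexity of $h^*$.
\item For $f$, combine the sufficient decrease inequality $f(\xx_{k+1})\le f(\zz_k)+\langle\nabla f(\zz_k),\xx_{k+1}-\zz_k\rangle+\tfrac{\sigma}{2}\|\xx_{k+1}-\zz_k\|_\MM^2$ with $f(\zz_k)-f(\xx)\le\langle\nabla f(\zz_k),\zz_k-\xx\rangle-\tfrac{\mu_f}{2}\|\zz_k-\xx\|^2$.
\end{itemize}
Summing these, the $\nabla f(\zz_k)$ terms cancel against the one inside $\vv_g$. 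What remains is
\begin{equation*}
\langle \MM(\zz_k-\xx_{k+1})-\uu_k,\ \xx_{k+1}-\xx\rangle+\langle\uu,\xx_{k+1}\rangle-\langle\uu_{k+1},\xx\rangle+\langle\zz_{k+1},\uu_{k+1}-\uu\rangle,
\end{equation*}
together with $\tfrac{\sigma}{2}\|\xx_{k+1}-\zz_k\|_\MM^2-\tfrac{\mu_f}{2}\|\zz_k-\xx\|^2-\tfrac{\mu_h}{2}\|\uu_{k+1}-\uu\|^2$.

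\emph{Step 3: algebra (main obstacle).} Rewrite the inner products so that only the differences $\xx_{k+1}-\xx$, $\zz_{k+1}-\xx$, $\uu_{k+1}-\uu$ and $\uu_k-\uu$ appear. Add and subtract $\langle\uu,\cdot\rangle$ and $\langle\uu_{k+1},\xx_{k+1}\rangle$. Then substitute $\xx_{k+1}-\zz_{k+1}=\MM^{-1}(\uu_{k+1}-\uu_k)$. I expect the linear part to collapse to
\begin{equation*}
\langle\MM(\zz_k-\xx_{k+1}),\xx_{k+1}-\xx\rangle+\langle \MM(\zz_{k+1}-\xx_{k+1}),\xx_{k+1}-\xx\rangle\ \text{-type terms}\ +\langle \uu_k-\uu_{k+1},\uu_{k+1}-\uu\rangle_{\MM^{-1}}.
\end{equation*}
Each such term is then converted with the three-point identity $2\langle a-b,b-c\rangle_\MM=\|a-c\|_\MM^2-\|a-b\|_\MM^2-\|b-c\|_\MM^2$. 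The $\uu$-terms produce $\|\uu_k-\uu\|_{\MM^{-1}}^2-\|\uu_{k+1}-\uu\|_{\MM^{-1}}^2-\|\uu_{k+1}-\uu_k\|_{\MM^{-1}}^2$. The $\xx$-terms produce $\|\zz_k-\xx\|_\MM^2-\|\zz_{k+1}-\xx\|_\MM^2-\|\xx_{k+1}-\zz_k\|_\MM^2$, plus a $+\|\xx_{k+1}-\zz_{k+1}\|_\MM^2$ that cancels the $\uu$-difference term, since $\|\uu_{k+1}-\uu_k\|_{\MM^{-1}}=\|\xx_{k+1}-\zz_{k+1}\|_\MM$.

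Bookkeeping these cross terms correctly is the only delicate point. I would verify it by checking the case $\xx=\xx_{k+1}$, $\uu=\uu_{k+1}$.

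\emph{Step 4: conclude.} Multiplying by $2$, the $-\|\xx_{k+1}-\zz_k\|_\MM^2$ combines with the $+\sigma\|\xx_{k+1}-\zz_k\|_\MM^2$ from the sufficient decrease to give $-(1-\sigma)\|\xx_{k+1}-\zz_k\|_\MM^2$. Moving this term and the strong convexity terms to the left side yields \eqref{eq:key-recursive}.
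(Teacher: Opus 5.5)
Your proposal is correct and is essentially the paper's proof: the same ingredients (prox optimality for $g$ and $h$, sufficient decrease combined with strong convexity of $f$, strong convexity of $h^*$, then polarization identities with $\uu_{k+1}-\uu_k=\MM_k(\xx_{k+1}-\zz_{k+1})$). The only difference is that the paper splits the gap into $\LL(\xx_{k+1},\uu_{k+1})-\LL(\xx,\uu_{k+1})$ and $\LL(\xx_{k+1},\uu)-\LL(\xx_{k+1},\uu_{k+1})$, whereas you treat it in one pass. One small slip in your Step 3 display: the linear part collapses to $\langle\MM_k(\zz_k-\zz_{k+1}),\xx_{k+1}-\xx\rangle+\langle\MM_k^{-1}(\uu_k-\uu_{k+1}),\uu_{k+1}-\uu\rangle$, so the second $\MM$-term should involve $\xx_{k+1}-\zz_{k+1}$, not $\zz_{k+1}-\xx_{k+1}$; your proposed check at $\xx=\xx_{k+1}$, $\uu=\uu_{k+1}$ makes both sides vanish for either sign and would not catch this, but the norm terms you list as the outcome already correspond to the correct sign.
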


\begin{proof}See \cref{proof:keyrecursive}.\end{proof}

\begin{corollary}\label{cor:keyrecursive-solution}
Under the same assumptions as \cref{lem:keyrecursive}, for every $(\xx^\star, \uu^\star)\in S(\LL)$ and every $k$,
\begin{equation}
\begin{split}
&    \|\zz_{k+1} - \xx^\star\|_{\MM_k}^2 +
    \|\uu_{k+1} - \uu^\star\|_{\MM_k^{-1}}^2
    \leq \\
&    \|\zz_k - \xx^\star\|_{\MM_k}^2 +
    \|\uu_k - \uu^\star\|_{\MM_k^{-1}}^2    -\\
&    (1 - \sigma) \|\xx_{k+1} - \zz_k\|_{\MM_k}^2.
\end{split}
\end{equation}
\end{corollary}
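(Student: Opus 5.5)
The plan is to specialize \cref{lem:keyrecursive} to $(\xx,\uu) = (\xx^\star,\uu^\star)$ for an arbitrary $(\xx^\star,\uu^\star)\in S(\LL)$ and then discard the nonnegative terms on the left-hand side. First, $(\xx^\star,\uu^\star)\in S(\LL)\subseteq\Omega$ by definition \eqref{eq:saddle-point}, so the lemma applies with this choice. Second, since $(\xx_{k+1},\uu_{k+1})$ are produced by the proximal steps \eqref{eq:tos-iter-prox-g} and \eqref{eq:tos-iter-prox-h} together with the dual update, and by \eqref{eq:prox-to-subdiff} and \eqref{eq:subdiff-inverse} $\xx_{k+1}\in\dom\partial g\subseteq\dom g$ and $\uu_{k+1}\in\partial h(\zz_{k+1})$, hence $\uu_{k+1}\in\dom\partial h^*\subseteq\dom h^*$, we have $(\xx_{k+1},\uu_{k+1})\in\Omega$. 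The saddle inequalities in \eqref{eq:saddle-point} therefore hold with $\xx=\xx_{k+1}$ and $\uu=\uu_{k+1}$.

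The key step is the sign of the Lagrangian gap. From \eqref{eq:saddle-point},
\[
\LL(\xx^\star,\uu_{k+1}) \le \LL(\xx^\star,\uu^\star) \le \LL(\xx_{k+1},\uu^\star),
\]
so $\LL(\xx_{k+1},\uu^\star)-\LL(\xx^\star,\uu_{k+1})\ge 0$. This is exactly the first term on the left of \eqref{eq:key-recursive} with $(\xx,\uu)=(\xx^\star,\uu^\star)$, and it is nonnegative.

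Finally, the terms $\mu_f\|\zz_k-\xx^\star\|_2^2$ and $\mu_h\|\uu_{k+1}-\uu^\star\|_2^2$ are nonnegative because $\mu_f,\mu_h\ge 0$, and in any case the corollary can take $\mu_f=\mu_h=0$. Dropping them along with the Lagrangian gap leaves
\[
(1-\sigma)\|\xx_{k+1}-\zz_k\|_{\MM_k}^2 \le \|\zz_k-\xx^\star\|_{\MM_k}^2 + \|\uu_k-\uu^\star\|_{\MM_k^{-1}}^2 - \|\zz_{k+1}-\xx^\star\|_{\MM_k}^2 - \|\uu_{k+1}-\uu^\star\|_{\MM_k^{-1}}^2 .
\]
Rearranging gives the claimed inequality.

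The only point needing care is the membership $(\xx_{k+1},\uu_{k+1})\in\Omega$, which is what makes the saddle inequalities (and finite Lagrangian values) available. It follows as described from the prox optimality condition \eqref{eq:prox-to-subdiff} together with the identity $\MM_k(\xx_{k+1}+\MM_k^{-1}\uu_k-\zz_{k+1}) = \uu_{k+1}$, which comes from the dual update \eqref{eq:tos-iter-dual-update}.
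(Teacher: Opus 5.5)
Your proposal is correct and is essentially the paper's proof. You apply \cref{lem:keyrecursive} at a saddle point, use the prox optimality conditions and the dual update to get $(\xx_{k+1},\uu_{k+1})\in\Omega$ so that $\LL(\xx_{k+1},\uu^\star)-\LL(\xx^\star,\uu_{k+1})\geq 0$, and drop this gap together with the nonnegative strong-convexity terms, spelling out in more detail the domain-membership step that the paper states in one line.
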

 
\begin{proof} For every $(\xx^\star, \uu^\star) \in S(\LL)$, $\LL(\xx,\uu^\star) - \LL(\xx^\star,\uu) \geq 0$ for all $(\xx,\uu)\in\Omega$.  Applying \eqref{eq:prox-to-subdiff} to the proximal steps, together with the dual update, gives $(\xx_{k+1},\uu_{k+1})\in\Omega$.\end{proof}

Inequality \eqref{eq:key-recursive} is almost a straightforward generalization to variable metrics of the single-iteration inequality used in the proof of \citet[Lemma~4]{pedregosa2018adaptive},
\begin{equation}
\begin{split}
&2\gamma_k(\mathcal{L}(\xx_{k+1},\uu)-\mathcal{L}(\xx,\uu_{k+1})) + \gamma_k \mu_f \|\zz_k-\xx\|_2^2\\
&+ \gamma_k \mu_h \|\uu_{k+1} - \uu\|_2^2  \leq \|\zz_k-\xx\|^2 + \gamma_k^2\|\uu_k-\uu\|^2  \\
& - \|\zz_{k+1}-\xx\|^2- \gamma_k^2\|\uu_{k+1} - \uu\|^2.\label{eq:key-recursive-atos}
\end{split}
\end{equation}
Setting aside the differences in metric selection and backtracking, an iteration of ATOS with step size $\gamma_k$ is equivalent to an iteration of \vmtos with $\MM_k = \gamma_k^{-1} \II$.

There are two important distinctions between the inequalities \eqref{eq:key-recursive} and \eqref{eq:key-recursive-atos}.
First, the term $(1-\sigma)\|\xx_{k+1} - \zz_k\|_{\MM_k}^2$ on the left-hand side of \eqref{eq:key-recursive} is not present in \eqref{eq:key-recursive-atos} because of the less restrictive definition of sufficient decrease in ATOS, equivalent to the choice $\sigma = 1$.

Second, the right-hand side terms of \eqref{eq:key-recursive} are measured in the $\MM_k$ and $\MM_k^{-1}$ norms, rather than the $\ell_2$-norm as in \eqref{eq:key-recursive-atos}.  When the metrics vary with $k$, summing the inequality over $k$ does not telescope the right-hand side, and there is not a weighted sum that will telescope the right-hand side for all $\xx$ and $\uu$.

\paragraph{Bounding metric equivalences}

To relate the inequality \eqref{eq:key-recursive} across iterations, we must relate the $\MM_k$-{} and $\MM_{k}^{-1}$-norms to the $\MM_{k-1}$-{} and $\MM_{k-1}^{-1}$-norms.  Let us define
\begin{equation}\label{eq:big-delta-def}
    \Delta(\MM,\NN) := \min\{d \geq 1: d^{-1}\MM \preceq \NN \preceq d\MM\}.
\end{equation}
This quantity is submultiplicative, so for any $j\leq k$,
\begin{equation}\label{eq:delta-def}
    \Delta(\MM_j,\MM_k) \leq \prod_{i=j}^{k-1} \Delta(\MM_i,\MM_{i+1}) =: \Delta_{j,k}.
\end{equation}
If backtracking never occurs, then the metric selection on line \ref{line:adapt-metric} enforces $\Delta_{k-1,k} \leq d_k = 1 + \rho/(k(1 + \log k))$.  The product of these terms is $O((\log k)^\rho)$ (see \cref{lem:d-prod}), and when backtracking does occur $\Delta_{0,k}$ is still polylogarithmic in $k$.

\begin{theorem}\label{thm:metric-distance}
Let assumptions (\ref{eq:proper},\ref{eq:L-lipschitz}) hold, and for each $k\geq 0$ let $\MM_k\in \SPD$ be the final value of the matrix after backtracking in \vmtos. Then $\Delta_{0,k} \in O((\log k)^{2\rho})$.
\end{theorem}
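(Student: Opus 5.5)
We bound the one-step ratios $\Delta(\MM_{i},\MM_{i+1})$ separately for the adaptive choice and for backtracking, then multiply. Write $\MM_{i+1} = \tau^{b_{i+1}}\widehat\MM_{i+1}$, where $b_{i+1}\ge 0$ is the number of backtracking steps taken in iteration $i+1$. Line \ref{line:adapt-metric} gives $\Delta(\MM_i,\widehat\MM_{i+1})\le d_{i+1}$. Since $\Delta(\widehat\MM_{i+1},\tau^{b}\widehat\MM_{i+1})=\tau^{b}$ and $\Delta$ is submultiplicative,
\[
\Delta(\MM_i,\MM_{i+1})\le d_{i+1}\tau^{b_{i+1}} .
\]
It follows that
\[
\Delta_{0,k}\le \Bigl(\prod_{i=1}^{k} d_i\Bigr)\,\tau^{\sum_{i=1}^k b_i}.
\]
By \cref{lem:d-prod}, the first factor is $O((\log k)^{\rho})$. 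The task therefore reduces to showing that $\tau^{\sum_{i\le k} b_i}=O((\log k)^{\rho})$, i.e.\ that the total backtracking contributes at most one more power.

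The main obstacle is controlling $\sum b_i$. The key observation is that backtracking only occurs when the trial metric is ``too small.'' By $L$-smoothness, the sufficient decrease condition holds whenever $\sigma\MM\succeq L\II$, i.e.\ whenever $\lambda_{\min}(\MM)\ge L/\sigma$. This is the same fact used in \cref{prop:backtrack}, via the descent lemma $f(\xx)\le f(\zz)+\langle\nabla f(\zz),\xx-\zz\rangle+\tfrac L2\|\xx-\zz\|^2$. Track the scalar $m_k:=\lambda_{\min}(\MM_k)$. The adaptive step gives $\lambda_{\min}(\widehat\MM_{k+1})\ge d_{k+1}^{-1}m_k$. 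If backtracking happens in iteration $k+1$, every rejected trial had $\lambda_{\min}<L/\sigma$, so the last rejected one, $\tau^{b_{k+1}-1}\widehat\MM_{k+1}$, satisfies $\tau^{b_{k+1}-1}\lambda_{\min}(\widehat\MM_{k+1})<L/\sigma$. Hence the accepted metric satisfies $m_{k+1}<\tau L/\sigma$. In both cases
\[
m_{k+1}=\tau^{b_{k+1}}\lambda_{\min}(\widehat\MM_{k+1})\ge \tau^{b_{k+1}} d_{k+1}^{-1} m_k .
\]

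Now telescope. Let $j\le k$ be the last iteration at which backtracking occurred. If none occurred, then $\sum b_i=0$ and we are done. Multiplying the last inequality over $i=1,\dots,j$ gives
\[
\tau^{\sum_{i\le j} b_i}\le \frac{m_j}{m_0}\prod_{i=1}^{j} d_i < \frac{\tau L}{\sigma\, m_0}\prod_{i=1}^{j}d_i ,
\]
using $m_j<\tau L/\sigma$. Since $b_i=0$ for $i>j$, the left side equals $\tau^{\sum_{i\le k}b_i}$. Applying \cref{lem:d-prod} again, this is $O((\log k)^{\rho})$ with constant $\tau L/(\sigma\lambda_{\min}(\MM_0))$, which depends only on the data and the initial metric. (If $L=0$, backtracking never occurs.) Combining this with the first factor yields $\Delta_{0,k}=O((\log k)^{2\rho})$.

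The delicate point is that the eigenvalue bound must go through $\lambda_{\min}$ rather than through the full Loewner order. The relation $d^{-1}\MM_k\preceq\widehat\MM_{k+1}$ gives only $\lambda_{\min}(\widehat\MM_{k+1})\ge d^{-1}\lambda_{\min}(\MM_k)$, and $\lambda_{\min}$ scales exactly under multiplication by $\tau^{b}$, so the chain is valid. The loose ``at least $L/\sigma$ suffices'' acceptance criterion is the only property of the line search that is used.
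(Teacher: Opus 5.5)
Your proof is correct and follows essentially the same route as the paper: you split $\Delta_{0,k}$ into $\prod_j d_j$ times the total backtracking factor, then control the latter through $\lambda_{\min}(\MM_k)\ge \tau^{\ell_k}d_k^{-1}\lambda_{\min}(\MM_{k-1})$ together with the fact that a metric accepted after backtracking has $\lambda_{\min}<\tau L/\sigma$. The only cosmetic difference is that you telescope up to the last backtracking index and use $d_i\ge 1$ to extend the product to $k$, whereas the paper unrolls a recursive $\max\{\cdot,\cdot\}$ bound to the same constant $\tau L/(\sigma\lambda_{\min}(\MM_0))\prod_{j\le k} d_j$.
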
 

\begin{proof} See \cref{proof:metric-distance}.\end{proof}

\paragraph{Pointwise sublinear convergence}

\Cref{thm:metric-distance} gives us a bound we can use to combine the inequality \eqref{eq:key-recursive}
from \cref{lem:keyrecursive} across iterations.  To simplify the statements of our convergence results,
we make the following assumption on the initial point:
\begin{equation}
    \uu_0 \in \partial h(\zz_0).\tag{A4}\label{eq:u0-in-partial-h-z0}
\end{equation}
This assumption can be enforced by selecting an arbitrary $\yy_0 \in \mathbb{R}^n$ and computing
\[
\begin{aligned}
    \zz_0 &\gets \prox_{\widehat\MM_0,h} (\yy_0), \\
    \uu_0 &\gets \widehat\MM_0(\yy_0 - \zz_0).
\end{aligned}
\]

Otherwise, the statements of our convergence results can be modified to relate subsequent iterations to $k=1$ instead of $k=0$.  For the statement of this and later results, we need to define the weighted distance of $(\xx,\uu)$ from the solution set,
\begin{equation}\label{eq:strong-dist}
d((\xx,\uu))^2 :=
\min_{(\xx^\star,\uu^\star)\in S(\LL)}
\|\xx - \xx^\star\|_{\MM_0}^2 + \|\uu - \uu^\star\|_{\MM_0^{-1}}^2.
\end{equation}

\begin{theorem}\label{thm:pointwise-convergence}
Let assumptions (\ref{eq:proper},\ref{eq:L-lipschitz},\ref{eq:solutions-exist},\ref{eq:u0-in-partial-h-z0}) hold.
After iteration $k$ of \vmtos there exists some $j$ with $1 \leq j\leq k+1$ such that
\[
r(\xx_j,\uu_j) \leq \Delta_{0,k} d((\zz_0,\uu_0)) \sqrt{\frac{C_p}{k+1}} ,
\]
where $C_p = (\max\{2\|\MM_0\|,\|\MM_0^{-1}\|\} + 2L^2\|\MM_0^{-1}\|) / (1 - \sigma)$.
\end{theorem}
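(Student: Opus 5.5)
The plan is to combine three ingredients: a Fejér-type descent in varying norms (\cref{cor:keyrecursive-solution}), telescoped after rescaling by $\Delta_{0,j}$; a firm-nonexpansiveness bound that controls all increments in \eqref{eq:first-r-bound} by the single quantity $a_j := \|\xx_{j+1}-\zz_j\|_{\MM_j}^2$; and a minimum-over-iterates argument. The main care is in getting the right powers of $\Delta_{0,j}$ to combine into the stated $\Delta_{0,k}$.

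\emph{Step 1 (weighted telescoping).} Fix $(\xx^\star,\uu^\star)\in S(\LL)$ attaining the minimum in $d((\zz_0,\uu_0))$. For $j\geq 0$ set
\[
W_{j+1} := \|\zz_{j+1}-\xx^\star\|_{\MM_j}^2+\|\uu_{j+1}-\uu^\star\|_{\MM_j^{-1}}^2,
\]
and let $W_0 := d((\zz_0,\uu_0))^2$, measured in $\MM_0$. By \eqref{eq:big-delta-def}, $\|\cdot\|^2_{\MM_j}\le \Delta(\MM_{j-1},\MM_j)\|\cdot\|^2_{\MM_{j-1}}$, and the same holds for the inverse metrics. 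Combining this with \cref{cor:keyrecursive-solution} gives
\[
W_{j+1}\le \Delta(\MM_{j-1},\MM_j)\,W_j-(1-\sigma)a_j ,
\]
where the factor is $1$ when $j=0$. Dividing by $\Delta_{0,j}=\Delta_{0,j-1}\Delta(\MM_{j-1},\MM_j)$ yields
\[
\frac{W_{j+1}}{\Delta_{0,j}}\le \frac{W_j}{\Delta_{0,j-1}}-(1-\sigma)\frac{a_j}{\Delta_{0,j}} .
\]
Summing from $j=0$ to $k$ gives $\sum_{j=0}^k a_j/\Delta_{0,j}\le d((\zz_0,\uu_0))^2/(1-\sigma)$. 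Hence some $j\le k$ satisfies
\[
\frac{a_j}{\Delta_{0,j}}\le \frac{d((\zz_0,\uu_0))^2}{(1-\sigma)(k+1)} .
\]

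\emph{Step 2 (controlling the residual by $a_j$).} I first observe that \eqref{eq:u0-in-partial-h-z0} propagates: by \eqref{eq:prox-to-subdiff} applied to \eqref{eq:tos-iter-prox-h} and \eqref{eq:tos-iter-dual-update}, we have $\uu_{j+1}\in\partial h(\zz_{j+1})$. Hence $\zz_j=\prox_{\MM_j,h}(\zz_j+\MM_j^{-1}\uu_j)$, and by the same relation $\zz_{j+1}=\prox_{\MM_j,h}(\xx_{j+1}+\MM_j^{-1}\uu_j)$, with residuals $\MM_j^{-1}\uu_j$ and $\MM_j^{-1}\uu_{j+1}$ respectively. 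Firm nonexpansiveness of the prox in the $\MM_j$-norm then gives
\[
\|\zz_{j+1}-\zz_j\|_{\MM_j}^2+\|\uu_{j+1}-\uu_j\|_{\MM_j^{-1}}^2\le a_j .
\]
Plugging this into \eqref{eq:first-r-bound}, I use $\|a+b\|^2\le 2\|a\|^2+2\|b\|^2$, the bound $\|\MM_j\vv\|^2\le\|\MM_j\|\,\|\vv\|_{\MM_j}^2$, the analogous bound for $\MM_j^{-1}$, and \eqref{eq:L-lipschitz} with $\|\vv\|^2\le\|\MM_j^{-1}\|\,\|\vv\|^2_{\MM_j}$. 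This gives
\[
r(\xx_{j+1},\uu_{j+1})^2\le\bigl(\max\{2\|\MM_j\|,\|\MM_j^{-1}\|\}+2L^2\|\MM_j^{-1}\|\bigr)a_j .
\]
Since $\|\MM_j\|\le\Delta_{0,j}\|\MM_0\|$ and $\|\MM_j^{-1}\|\le\Delta_{0,j}\|\MM_0^{-1}\|$, the right side is at most $(1-\sigma)C_p\,\Delta_{0,j}\,a_j=(1-\sigma)C_p\,\Delta_{0,j}^2\,(a_j/\Delta_{0,j})$.

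\emph{Step 3 (combining).} For the index $j$ chosen in Step 1, Steps 1 and 2 give
\[
r(\xx_{j+1},\uu_{j+1})^2\le \Delta_{0,j}^2\, d((\zz_0,\uu_0))^2\,\frac{C_p}{k+1}.
\]
Since every factor $\Delta(\cdot,\cdot)\ge 1$, the sequence $\Delta_{0,j}$ is nondecreasing, so $\Delta_{0,j}\le\Delta_{0,k}$ and the claim follows with index $j+1\in[1,k+1]$.

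The step I expect to need the most care is Step 2: realizing that \eqref{eq:u0-in-partial-h-z0} and its propagation let firm nonexpansiveness bound both the $\zz$- and $\uu$-increments by $a_j$. The other delicate point is the bookkeeping of $\Delta$ factors. One power of $\Delta_{0,j}$ comes from the telescoping weights and one from converting $\|\MM_j\|$ and $\|\MM_j^{-1}\|$ to $\MM_0$; together they give exactly $\Delta_{0,k}$ after the square root.
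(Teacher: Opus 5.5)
Your proof is correct and follows the paper's argument. You use \cref{cor:keyrecursive-solution} with metric conversion and telescoping, then bound the residual by $\|\xx_{j+1}-\zz_j\|_{\MM_j}^2$; your firm-nonexpansiveness step is the paper's monotonicity/cosine-identity inequality \eqref{eq:bound-xz} in another form. The one difference is bookkeeping: you divide by $\Delta_{0,j}$ before telescoping to get the slightly sharper $\sum_j a_j/\Delta_{0,j}\le d((\zz_0,\uu_0))^2/(1-\sigma)$, whereas the paper bounds $\sum_j a_j$ by $\Delta_{0,k}\,d((\zz_0,\uu_0))^2/(1-\sigma)$ directly, and both give the same final factor $\Delta_{0,k}$.
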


\begin{proof} See \cref{proof:pointwise-convergence}. \end{proof}

The proof of \cref{thm:pointwise-convergence} places no assumptions on the size of $\Delta_{0,k}$, but if the metric selection bound in \vmtos is followed, $\tilde{O}(k^{-1/2})$ convergence occurs.

\begin{corollary}
Let the assumptions of \cref{thm:pointwise-convergence} hold and suppose the condition on line \ref{line:adapt-metric} is followed. Let $r_k:=\min_{1\leq j\leq k+1}r(\xx_j,\uu_j)$ be the smallest residual after iteration $k$ of \vmtos.  Then $r_k\in O((\log k)^{2\rho}/\sqrt{k})$ as $k\to\infty$.
\end{corollary}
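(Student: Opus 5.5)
This corollary follows by combining \cref{thm:pointwise-convergence} with \cref{thm:metric-distance}. By definition $r_k = \min_{1\le j\le k+1} r(\xx_j,\uu_j)$. \Cref{thm:pointwise-convergence} supplies some index $j\in\{1,\dots,k+1\}$ with
\[
r(\xx_j,\uu_j)\le \Delta_{0,k}\, d((\zz_0,\uu_0))\sqrt{C_p/(k+1)}.
\]
Since $r_k$ is the minimum over that range,
\[
r_k \le \Delta_{0,k}\, d((\zz_0,\uu_0))\sqrt{C_p/(k+1)}.
\]

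Next we check that every factor except $\Delta_{0,k}$ is independent of $k$. The weighted distance $d((\zz_0,\uu_0))$ in \eqref{eq:strong-dist} is measured in the fixed metric $\MM_0$. It is a minimum over $S(\LL)$, which is nonempty by \eqref{eq:solutions-exist}. That set is closed, being the zero set of a maximal monotone operator, so the minimum is attained and finite. Likewise $C_p$ depends only on $\MM_0$, $L$ and $\sigma$. Also $1/\sqrt{k+1}\le 1/\sqrt{k}$ for $k\ge1$.

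\Cref{thm:metric-distance} applies because \eqref{eq:proper} and \eqref{eq:L-lipschitz} are among the hypotheses and line \ref{line:adapt-metric} is followed. It gives a constant $C$ and an index $k_0$ with $\Delta_{0,k}\le C(\log k)^{2\rho}$ for all $k\ge k_0$. Substituting, for $k\ge k_0$ we get
\[
r_k \le C\,d((\zz_0,\uu_0))\sqrt{C_p}\,(\log k)^{2\rho}/\sqrt{k},
\]
which is the claimed $O((\log k)^{2\rho}/\sqrt{k})$ bound.

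The only delicate point is bookkeeping around which $\MM_0$ is meant. In \cref{thm:pointwise-convergence}, $\MM_0$ is the accepted metric after backtracking at iteration $0$, not $\widehat\MM_0$. It is nonetheless a fixed matrix, determined before the asymptotic limit is taken, so the constants remain $k$-independent. A second minor point is that the $O(\cdot)$ statement concerns only $k\to\infty$, so small $k$, where $\log k$ may vanish, can be ignored.
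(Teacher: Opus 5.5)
Your proposal is correct and matches the paper's argument. The paper gives no separate proof: it states the corollary right after \cref{thm:pointwise-convergence}, observing that the theorem makes no assumption on $\Delta_{0,k}$, so substituting $\Delta_{0,k}\in O((\log k)^{2\rho})$ from \cref{thm:metric-distance} into $r_k\le\Delta_{0,k}\,d((\zz_0,\uu_0))\sqrt{C_p/(k+1)}$ yields the rate, exactly as you do, and your bookkeeping on the $k$-independence of $d((\zz_0,\uu_0))$ and $C_p$ is accurate.
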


\paragraph{Geometric convergence with strong convexity}

If the monotone operator defined by the saddle-point optimality conditions (\ref{eq:dLdx},\ref{eq:dLdu}) is strongly monotone, then many fixed point iterations converge linearly to the unique saddle point.
Under strong convexity of $f$ and $h^*$, we first establish R-linear convergence assuming uniform bounds on the accepted metrics and their inverses.
We then prove in \cref{sec:diagvm} that the practical diagonal BB rule guarantees these bounds.

When $f$ and $h^*$ are both strongly convex, it is reasonable to assume that the metrics will be selected within some bounded range: for constants $0<c\leq C<\infty$,
\begin{equation}
    c \II \preceq \MM_k \preceq C \II \quad \text{for all } k \geq 0.\tag{A5}\label{eq:metric-double-bounds}
\end{equation}

\begin{theorem}\label{thm:linear-convergence}
Let assumptions (\ref{eq:proper},\ref{eq:L-lipschitz},\ref{eq:solutions-exist},\ref{eq:metric-double-bounds}) hold.  Assume additionally that $f$ is $\mu_f$-strongly convex and $h^*$ is $\mu_h$-strongly convex for $\mu_f, \mu_h > 0$.
Let $(\xx^\star,\uu^\star)$ be the unique saddle point.  We define
\begin{align}
E_k &:= \|\zz_k - \xx^\star\|_{\MM_k}^2 + (1-\theta)^{-1}\|\uu_k - \uu^\star\|_{\MM_{k}^{-1}}^2, \\
\theta &:= c\mu_h / (1 + c\mu_h), \\
\phi   &:= C^{-1} \mu_f.
\end{align}
Let $(\zz_{k+1}, \uu_{k+1})$ be the iterate produced by \vmtos after $k+1$ iterations. Then,
\begin{equation}\label{eq:quasilinear-bound}
    E_{k+1} \leq \Delta_{0,k+1} (1-\min\{\phi, \theta\})^{k+1} E_0.
\end{equation}
\end{theorem}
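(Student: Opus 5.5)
Apply \cref{lem:keyrecursive} at $(\xx,\uu)=(\xx^\star,\uu^\star)$. Since the saddle-point inequality makes the Lagrangian gap term $2(\LL(\xx_{k+1},\uu^\star)-\LL(\xx^\star,\uu_{k+1}))$ nonnegative, drop it together with the $(1-\sigma)$ term. This leaves
\[
\|\zz_{k+1}-\xx^\star\|_{\MM_k}^2 + \|\uu_{k+1}-\uu^\star\|_{\MM_k^{-1}}^2 + \mu_f\|\zz_k-\xx^\star\|^2 + \mu_h\|\uu_{k+1}-\uu^\star\|^2 \le \|\zz_k-\xx^\star\|_{\MM_k}^2 + \|\uu_k-\uu^\star\|_{\MM_k^{-1}}^2 .
\]
Convert the Euclidean strong-convexity terms into metric norms using \eqref{eq:metric-double-bounds}. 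From $\MM_k\preceq C\II$ we get $\mu_f\|\zz_k-\xx^\star\|^2\ge \phi\|\zz_k-\xx^\star\|_{\MM_k}^2$. From $\MM_k^{-1}\preceq c^{-1}\II$ we get $\mu_h\|\uu_{k+1}-\uu^\star\|^2\ge c\mu_h\|\uu_{k+1}-\uu^\star\|_{\MM_k^{-1}}^2$. Therefore
\[
\|\zz_{k+1}-\xx^\star\|_{\MM_k}^2 + (1+c\mu_h)\|\uu_{k+1}-\uu^\star\|_{\MM_k^{-1}}^2 \le (1-\phi)\|\zz_k-\xx^\star\|_{\MM_k}^2 + \|\uu_k-\uu^\star\|_{\MM_k^{-1}}^2 .
\]
Since $1+c\mu_h=(1-\theta)^{-1}$, multiplying the dual terms through gives
\[
\|\zz_{k+1}-\xx^\star\|_{\MM_k}^2 + (1-\theta)^{-1}\|\uu_{k+1}-\uu^\star\|_{\MM_k^{-1}}^2 \le (1-\phi)\|\zz_k-\xx^\star\|_{\MM_k}^2 + (1-\theta)(1-\theta)^{-1}\|\uu_k-\uu^\star\|_{\MM_k^{-1}}^2 .
\]
So the quantity $\tilde E_{k+1}:=\|\zz_{k+1}-\xx^\star\|_{\MM_k}^2+(1-\theta)^{-1}\|\uu_{k+1}-\uu^\star\|_{\MM_k^{-1}}^2$ satisfies $\tilde E_{k+1}\le(1-\min\{\phi,\theta\})\,\hat E_k$. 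Here $\hat E_k$ is $E_k$ with both norms taken in $\MM_k$ rather than $\MM_{k}$ at index $k$; these agree, since $E_k$ is defined with $\MM_k$. Thus $\tilde E_{k+1}\le (1-\min\{\phi,\theta\})E_k$.

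The remaining task is passing from $\tilde E_{k+1}$, measured in $\MM_k$, to $E_{k+1}$, measured in $\MM_{k+1}$. By definition \eqref{eq:big-delta-def}, $\MM_{k+1}\preceq\Delta(\MM_k,\MM_{k+1})\MM_k$ and $\MM_{k+1}^{-1}\preceq\Delta(\MM_k,\MM_{k+1})\MM_k^{-1}$, the latter because inversion reverses the Loewner order. Hence $E_{k+1}\le \Delta(\MM_k,\MM_{k+1})\tilde E_{k+1}$. Chaining the one-step bounds gives
\[
E_{k+1}\le \prod_{i=0}^{k}\Delta(\MM_i,\MM_{i+1})\,(1-\min\{\phi,\theta\})^{k+1}E_0=\Delta_{0,k+1}(1-\min\{\phi,\theta\})^{k+1}E_0 ,
\]
using the definition \eqref{eq:delta-def}.

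The main obstacle is bookkeeping rather than depth. The key is to apply the metric change only once per step, after the contraction, so that the factor accumulated is exactly the product $\Delta_{0,k+1}$ and not something larger. The point needing care is the $\mu_f$ term: it involves $\zz_k$, not $\zz_{k+1}$, so it contracts the previous iterate on the right-hand side. The $\mu_h$ term involves $\uu_{k+1}$, so it enlarges the left-hand side, which is why the weight $(1-\theta)^{-1}$ appears. I would also double-check that $E_0$ uses $\MM_0$ (the accepted metric at iteration 0) consistently with the first application of the lemma.
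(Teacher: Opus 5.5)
Your proof is correct and follows the paper's argument essentially step for step. You apply \cref{lem:keyrecursive} at the saddle point, drop the nonnegative gap and $(1-\sigma)$ terms, use \eqref{eq:metric-double-bounds} to produce the factors $1-\phi$ and $(1-\theta)^{-1}=1+c\mu_h$, and then pay one factor $\Delta(\MM_k,\MM_{k+1})$ per step to switch metrics before iterating (your sentence about $\hat E_k$ is garbled but harmless, since the right-hand side at iteration $k$ is already $E_k$ in the accepted metric $\MM_k$).
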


\begin{proof}
See \cref{proof:linear-convergence}.
\end{proof}

\begin{corollary}\label{cor:quasilinear}
Let the assumptions of \cref{thm:linear-convergence} hold, let the proposed metrics be selected according to line \ref{line:adapt-metric} of \vmtos, and let $\beta:=\sqrt{1-\min\{\phi,\theta\}}\in(0,1)$.  Then
\[
d((\zz_k,\uu_k))\in O((\log k)^\rho\beta^k).
\]
The same rate holds for $\|\xx_k-\xx^\star\|+\|\uu_k-\uu^\star\|$ and for $r(\xx_k,\uu_k)$.
\end{corollary}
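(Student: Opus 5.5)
The plan is to combine \cref{thm:linear-convergence} with a sharper bound on $\Delta_{0,k}$ than the $(\log k)^{2\rho}$ of \cref{thm:metric-distance}, and then use (A5) to pass between norms.

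\textbf{Step 1: the metric-change product.} Here I would argue that $\Delta_{0,k}\in O((\log k)^\rho)$. Under (A5) the backtracking contributions are harmless. Each backtracking step multiplies $\MM_k$ by $\tau$, so the accepted metric satisfies $\MM_k\preceq\tau^{m_k}\widehat\MM_k$, where $m_k$ is the number of backtracks in iteration $k$. The key observation is that the cumulative product of backtracking factors cannot exceed what is allowed by $c\II\preceq\MM_k\preceq C\II$ together with the proposal bounds. Concretely, I would split $\Delta(\MM_0,\MM_k)\le\Delta(\MM_0,\cdot)$ directly: since both $\MM_0$ and $\MM_k$ lie between $c\II$ and $C\II$, we have $\Delta(\MM_0,\MM_k)\le C/c$. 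That is a uniform bound.

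The subtlety is that \eqref{eq:quasilinear-bound} is stated with the product $\Delta_{0,k+1}$ rather than with $\Delta(\MM_0,\MM_{k+1})$. I would therefore inspect where $\Delta_{0,k+1}$ enters the proof. If it enters only as a norm comparison between $\MM_0$ and $\MM_{k+1}$, the uniform bound $C/c$ suffices, which is even better than $(\log k)^\rho$. If the proof genuinely needs the product, I would bound it by $\prod_i d_i\in O((\log k)^\rho)$ using \cref{lem:d-prod}, with the backtracking factors $\tau$ handled as follows. Only upward jumps by $\tau$ occur beyond the proposal range, and since $\MM_k\preceq C\II$ while proposals can shrink $\MM$ only by the factors $d_i^{-1}$, the total count of backtracks is bounded by $\log_\tau(C/c)+\sum\log d_i$. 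This contributes at most another $O((\log k)^\rho)$ factor. I expect this to be the main obstacle: one must either argue this count with care or switch to the endpoint comparison. I will aim for the stated $(\log k)^\rho$ by the endpoint route where possible.

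\textbf{Step 2: convert $E_k$ to $d$.} By (A5),
\[
\|\zz_k-\xx^\star\|_{\MM_0}^2\le (C/c)\,\|\zz_k-\xx^\star\|_{\MM_k}^2,
\]
and similarly for the $\uu$-term with inverses. The factor $(1-\theta)^{-1}\ge1$ only helps, so $d((\zz_k,\uu_k))^2\le (C/c)E_k$. Combining this with \eqref{eq:quasilinear-bound} and taking square roots gives a bound of the form $\text{const}\cdot\Delta_{0,k}^{1/2}\beta^k$. This is $O((\log k)^\rho\beta^k)$, even when $\Delta_{0,k}=O((\log k)^{2\rho})$.

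\textbf{Step 3: the remaining quantities.} The convergence of $\uu_k$ is immediate from Step 2. For $\xx_{k+1}$, I would use Corollary~\ref{cor:keyrecursive-solution}: it gives
\[
(1-\sigma)\|\xx_{k+1}-\zz_k\|_{\MM_k}^2\le E'_k,
\]
so $\|\xx_{k+1}-\xx^\star\|\le\|\xx_{k+1}-\zz_k\|+\|\zz_k-\xx^\star\|$, and both terms decay at the same rate. For the residual, I would apply \eqref{eq:first-r-bound} and bound its right-hand side by $C\|\zz_k-\zz_{k+1}\|+L\|\xx_{k+1}-\zz_k\|+c^{-1}\|\uu_k-\uu_{k+1}\|$. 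Each difference is bounded by the distances of the consecutive iterates to $(\xx^\star,\uu^\star)$, each of which is $O((\log k)^\rho\beta^k)$. The index shift by one only changes the constant, since $\beta^{-1}$ is a constant.
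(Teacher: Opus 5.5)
Your proposal is correct, and Steps 2--3 are essentially the paper's proof: $d((\zz_k,\uu_k))\le\sqrt{(C/c)\,\Delta_{0,k}E_0}\,\beta^k$ with $\Delta_{0,k}\in O((\log k)^{2\rho})$ from \cref{thm:metric-distance}, and the residual follows from \eqref{eq:first-r-bound} with $\|\MM_k\|\le C$ and $\|\MM_k^{-1}\|\le c^{-1}$. For $\xx_{k+1}$, the paper uses the dual update $\xx_{k+1}-\zz_{k+1}=\MM_k^{-1}(\uu_{k+1}-\uu_k)$, whereas you use the $(1-\sigma)$ term of \cref{cor:keyrecursive-solution}; both work. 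Step 1 is unnecessary, and its ``endpoint route'' is not available, because $\Delta_{0,k+1}$ enters \cref{thm:linear-convergence} as a genuine product of per-step metric conversions, not as a single comparison of $\MM_0$ with $\MM_{k+1}$.
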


\begin{proof}
Assumption \eqref{eq:metric-double-bounds} implies $\Delta(\MM_0,\MM_k)\leq C/c$.  Thus \eqref{eq:big-delta-def}, \eqref{eq:quasilinear-bound}, and \cref{thm:metric-distance} give
\begin{align*}
d((\zz_k,\uu_k)) &\leq
\sqrt{\Delta(\MM_0,\MM_{k}) E_k}
\\
&\leq
\sqrt{(C/c)\Delta_{0,k}E_0}\,\beta^k
\\
&\in O((\log k)^\rho\beta^k).
\end{align*}%
Since the saddle point is unique, this also bounds the Euclidean errors in $\zz_k$ and $\uu_k$.  By \eqref{eq:tos-iter-dual-update} and $\|\MM_k^{-1}\|\leq c^{-1}$,
\[
\|\xx_{k+1}-\xx^\star\|
\leq\|\zz_{k+1}-\xx^\star\|
+c^{-1}\bigl(\|\uu_{k+1}-\uu^\star\|+\|\uu_k-\uu^\star\|\bigr).
\]
This gives the same rate for $\xx_k$.  Finally, \eqref{eq:first-r-bound}, $L$-smoothness, and $\|\MM_k\|\leq C$ give the residual rate.
\end{proof}

Since $(\log k)^\rho\beta^k=O(q^k)$ for every $q\in(\beta,1)$, the iterates and residuals converge R-linearly.

\section{Barzilai--Borwein metric selection}\label{sec:metric-selection}

\Vmtos specifies bounds on the proposed metrics used in the convergence proofs in the previous section, but otherwise allows freedom in their selection.

A popular way to compute the metric in proximal gradient methods, which we adopt here, is via the Barzilai--Borwein (BB) method, which approximates the secant condition,
\begin{equation} \label{secant}
\widehat\MM_{k+1} \vs_k \approx \yy_k,
\end{equation}
where $\vs_k:=\zz_{k+1}-\zz_k$ and $\yy_k:=\nabla f(\zz_{k+1})-\nabla f(\zz_k)$
\citep{fletcher2005barzilai}.

If the space of metrics includes all s.p.d.\ matrices and $\langle\vs_k,\yy_k\rangle>0$, then
\eqref{secant} can be satisfied exactly by, for example, a BFGS update.
When we restrict the choice of metric, for example to diagonal matrices, a regularized least-squares problem can be solved instead.
We show that when such an approach is used, the distance between metrics is small in a way that is bounded by the norm of the update $\vs_k$.

\begin{lemma}\label{lem:bb-closeness}
Let $\mathcal{C}$ be a closed convex subset of $\SPD$ containing $\MM_k$, let $\vs_k := \zz_{k+1} - \zz_k$, and let $\yy_k := \nabla f(\zz_{k+1}) - \nabla f(\zz_k)$.
Let $\omega > 0$ be given, and suppose $\widehat\MM_{k+1}$ satisfies
\begin{equation}\label{eq:bb-obj}
\widehat\MM_{k+1} := \argmin_{\MM \in \mathcal{C}} \tfrac{1}{2}\|\MM \vs_k - \yy_k\|_2^2 + \tfrac{\omega}{2} \|\MM - \MM_k\|_F^2.
\end{equation}
If $f$ is $L$-smooth, then for $\|\vs_k\|_2$ sufficiently small we have the bound
\[
\Delta(\MM_k,\widehat\MM_{k+1}) \leq \left(1 - \frac{L + \|\MM_k\|_2}{\omega\lambda_{\min}(\MM_k)}\|\vs_k\|^2\right)^{-1}.
\]
\end{lemma}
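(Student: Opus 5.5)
The plan is to compare $\widehat\MM_{k+1}$ directly with the previous metric $\MM_k$, which is feasible because $\MM_k\in\mathcal{C}$. We bound the perturbation $\DD := \widehat\MM_{k+1} - \MM_k$ in spectral norm by a quantity of order $\|\vs_k\|^2/\omega$. We then convert that bound into a relative (Loewner-order) bound against $\MM_k$, which gives the estimate on $\Delta(\MM_k,\widehat\MM_{k+1})$ through definition \eqref{eq:big-delta-def}.

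The first step is the optimality condition of the strongly convex problem \eqref{eq:bb-obj} over the closed convex set $\mathcal{C}$. For the Frobenius inner product, this variational inequality reads
\[
\bigl\langle (\widehat\MM_{k+1}\vs_k - \yy_k)\vs_k^{T} + \omega \DD,\; \MM - \widehat\MM_{k+1}\bigr\rangle_F \geq 0 \quad\text{for all } \MM\in\mathcal{C}.
\]
We test it with $\MM=\MM_k$, so that $\MM - \widehat\MM_{k+1} = -\DD$. Writing $\widehat\MM_{k+1}\vs_k - \yy_k = (\MM_k\vs_k - \yy_k) + \DD\vs_k$ then gives
\[
\omega\|\DD\|_F^2 \leq -\langle \MM_k\vs_k - \yy_k, \DD\vs_k\rangle - \|\DD\vs_k\|^2 \leq \|\MM_k\vs_k - \yy_k\|\,\|\DD\|\,\|\vs_k\|.
\]
By $L$-smoothness, $\|\yy_k\|\leq L\|\vs_k\|$, so $\|\MM_k\vs_k-\yy_k\|\leq (\|\MM_k\|+L)\|\vs_k\|$. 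Using $\|\DD\|\leq\|\DD\|_F$ and dividing through, we obtain $\|\DD\|\leq\|\DD\|_F\leq (L+\|\MM_k\|)\|\vs_k\|^2/\omega$.

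This step is the crux. The naive alternative compares objective values at $\MM_k$ and $\widehat\MM_{k+1}$, but it only yields $\|\DD\|_F = O(\|\vs_k\|/\sqrt{\omega})$, which is the wrong power. Using the variational inequality together with the cross term $\DD\vs_k$ is what produces the quadratic dependence on $\|\vs_k\|$ and the factor $1/\omega$.

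The last step is the conversion to a relative bound. Set $\varepsilon := (L+\|\MM_k\|)\|\vs_k\|^2/(\omega\lambda_{\min}(\MM_k))$. Then $-\varepsilon\lambda_{\min}(\MM_k)\II\preceq\DD\preceq\varepsilon\lambda_{\min}(\MM_k)\II$, and since $\lambda_{\min}(\MM_k)\II\preceq\MM_k$ this yields
\[
(1-\varepsilon)\MM_k\preceq\widehat\MM_{k+1}\preceq(1+\varepsilon)\MM_k.
\]
The smallness assumption on $\|\vs_k\|$ is exactly what ensures $\varepsilon<1$. For $\varepsilon\in[0,1)$ we have $1+\varepsilon\leq(1-\varepsilon)^{-1}$, so both inequalities in \eqref{eq:big-delta-def} hold with $d=(1-\varepsilon)^{-1}$. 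Therefore $\Delta(\MM_k,\widehat\MM_{k+1})\leq(1-\varepsilon)^{-1}$, as claimed.
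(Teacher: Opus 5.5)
Your proposal is correct and follows essentially the same argument as the paper. You test the variational inequality at the feasible point $\MM_k$, drop the $\|\DD\vs_k\|^2$ term, and apply Cauchy--Schwarz with $L$-smoothness to get $\|\DD\|_F\le(L+\|\MM_k\|_2)\|\vs_k\|^2/\omega$; you then pass to $(1-\varepsilon)\MM_k\preceq\widehat\MM_{k+1}\preceq(1+\varepsilon)\MM_k$ and use $1+\varepsilon\le(1-\varepsilon)^{-1}$. The differences are cosmetic: the paper phrases the relative bound through $\eta_k=\|\MM_k^{-1/2}\DD\MM_k^{-1/2}\|_2$, and it explicitly disposes of the trivial case $\DD=0$ before dividing by $\|\DD\|_F$, which you should also note.
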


\begin{proof}
See \cref{proof:bb-closeness}.
\end{proof}

With \cref{lem:bb-closeness}, we can construct a constrained optimization problem that bounds $\Delta(\MM_k,\widehat\MM_{k+1})$.  For this problem, let $\mathbb{K}\subseteq\mathbb{S}^n$ be closed and convex with $\MM_k\in\mathbb{K}$ for every $k\geq0$:

\begin{equation}\label{eq:bb-obj-with-bounds}
\begin{aligned}
\widehat\MM_{k+1} &:= \argmin_{\MM \in \mathbb{K}\cap\SPD} \tfrac{1}{2}\|\MM \vs_k - \yy_k\|_2^2 + \tfrac{\omega}{2} \| \MM - \MM_{k}\|_F^2, \\
&\phantom{:=}{} \text{ such that }\Delta(\MM_k,\MM) \leq d_{k+1},\quad
\text{(as in line \ref{line:adapt-metric})}.
\end{aligned}
\end{equation}

When the assumptions of \cref{thm:linear-convergence} hold and $\omega>0$ is fixed, we can show that this constrained version of the Barzilai--Borwein criterion removes the polylogarithmic factor from \cref{cor:quasilinear}, because the metric changes $\Delta_{k,k+1}-1$ are summable.

\begin{corollary}\label{cor:truly-linear}
    Let the assumptions of \cref{thm:linear-convergence} hold, and suppose that the proposed metric $\widehat\MM_{k+1}$ is selected according to \eqref{eq:bb-obj-with-bounds} for every $k\geq0$, with a fixed $\omega>0$.  Let $\beta:=\sqrt{1-\min\{\phi,\theta\}}$ as in \cref{cor:quasilinear}.  Then
    \[
        d((\zz_k,\uu_k)) \in O(\beta^k).
    \]
\end{corollary}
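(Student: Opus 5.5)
The plan is to redo the proof of \cref{cor:quasilinear}, replacing the polylogarithmic bound from \cref{thm:metric-distance} by a uniform bound $\sup_k \Delta_{0,k} < \infty$. By \eqref{eq:quasilinear-bound} and $\Delta(\MM_0,\MM_k)\leq C/c$ from \eqref{eq:metric-double-bounds},
\[
d((\zz_k,\uu_k))^2 \leq (C/c)\,\Delta_{0,k}\,(1-\min\{\phi,\theta\})^k E_0 .
\]
So it suffices to show that $\prod_{i\geq 0}\Delta(\MM_i,\MM_{i+1})$ converges, which holds if $\sum_i (\Delta(\MM_i,\MM_{i+1})-1)<\infty$.

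\textbf{Step 1: split each factor.} Write $\Delta(\MM_i,\MM_{i+1}) \leq \Delta(\MM_i,\widehat\MM_{i+1})\,\Delta(\widehat\MM_{i+1},\MM_{i+1})$. Backtracking only multiplies by $\tau$, so $\MM_{i+1}=\tau^{b_{i+1}}\widehat\MM_{i+1}$ and the second factor is $\tau^{b_{i+1}}$. I would bound the total number of backtracks $\sum_i b_i$ as in the proof of \cref{thm:metric-distance}. The key point is that backtracking cannot occur once $\sigma\lambda_{\min}(\MM_k)\geq L$. Under (A5), $\lambda_{\min}(\widehat\MM_{i+1})\geq d_{i+1}^{-1}c$. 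I would argue that only finitely many iterations have a trial metric with $\lambda_{\min}<L/\sigma$: a backtrack multiplies the metric by $\tau$, while proposals can shrink it by at most $d_{i+1}$. A cleaner route is to rerun the counting argument of \cref{thm:metric-distance} once the proposal factors are known to be summable. Either way, the aim is to show that the backtracking factors contribute a finite product.

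\textbf{Step 2: proposal factors via \cref{lem:bb-closeness}.} The constrained problem \eqref{eq:bb-obj-with-bounds} is a closed convex problem over a set containing $\MM_k$. This is the point to check: the set $\{\MM:\Delta(\MM_k,\MM)\leq d_{k+1}\}\cap\mathbb{K}$ is convex and lies in $\SPD$, so \cref{lem:bb-closeness} applies with this $\mathcal{C}$. It gives
\[
\Delta(\MM_k,\widehat\MM_{k+1})-1 \lesssim \frac{L+C}{\omega c}\|\vs_k\|^2
\]
once $\|\vs_k\|$ is small. For larger $\|\vs_k\|$ we still have the cap $d_{k+1}$. Now $\vs_k=\zz_{k+1}-\zz_k$, and by \cref{thm:linear-convergence} with the already established bound $\Delta_{0,k}=O((\log k)^{2\rho})$,
\[
\|\vs_k\|^2 \leq 2\|\zz_{k+1}-\xx^\star\|^2+2\|\zz_k-\xx^\star\|^2 = O\bigl((\log k)^{2\rho}(1-\min\{\phi,\theta\})^k\bigr),
\]
which is summable and tends to zero. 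Hence for all $k$ beyond some $K$, $\|\vs_k\|$ is in the small regime, and $\sum_k(\Delta(\MM_k,\widehat\MM_{k+1})-1)<\infty$. The finitely many early factors are each bounded by $d_{k+1}\tau^{b_{k+1}}$.

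\textbf{Step 3: conclude.} Combining the two steps, $\Delta_{0,k}\leq \Delta_{0,K}\prod_{i\geq K}(1+c_i')\tau^{b_{i+1}}$ with summable $c_i'$ and finitely many backtracks, so it is bounded by a constant. Substituting into the first display gives $d((\zz_k,\uu_k))\leq \mathrm{const}\cdot\beta^k$.

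\textbf{Main obstacle.} The hard part is the backtracking count in Step 1: proving that backtracking happens only finitely often, or at least that $\prod\tau^{b_i}$ stays bounded. My first attempt would use (A5) directly. After a backtrack $\lambda_{\min}(\MM_k)$ grows by $\tau$, while it is bounded by $C$. Since the proposal factors are eventually summable, the metric cannot repeatedly shrink back below $L/\sigma$. So eventually no backtracking occurs, and $\prod\tau^{b_i}\leq C/c$-type bounds follow from $\Delta(\MM_0,\MM_k)\leq C/c$.
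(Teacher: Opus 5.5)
Your plan is essentially the paper's proof. Both arguments get geometric decay of $\|\vs_k\|$ from \cref{thm:linear-convergence} and \cref{thm:metric-distance}. Both apply \cref{lem:bb-closeness} on the closed convex feasible set of \eqref{eq:bb-obj-with-bounds}, with coefficient at most $(L+C)/(\omega c)$, to get $Q:=\prod_k \Delta(\MM_k,\widehat\MM_{k+1})<\infty$. Both then bound total backtracking, which the paper does via $\tau^{\sum_k \ell_{k+1}}\le (C/c)Q$, derived from $\MM_N\succeq\tau^{\sum_k\ell_{k+1}}Q^{-1}\MM_0$ and \eqref{eq:metric-double-bounds}, giving $\Delta_{0,N}\le (C/c)Q^2$.

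Your rerun of the counting argument of \cref{thm:metric-distance}, with $\Delta(\MM_{k},\widehat\MM_{k+1})$ in place of $d_{k+1}$, also works. Two corrections, though. Drop the intermediate claim that only finitely many trial metrics have $\lambda_{\min}<L/\sigma$: it need not hold, since sufficient decrease can succeed below that threshold forever, and it is not needed. Also, the backtracking product is bounded by $(C/c)Q$, not by $C/c$ alone.
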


\begin{proof}
    See \cref{proof:truly-linear}.
\end{proof}

\subsection{Diagonal Variable Metric Selection}\label{sec:diagvm}
A practical choice for $\mathbb{K}$ in \eqref{eq:bb-obj-with-bounds} is the space of diagonal matrices.
Let $\MM_k=\mathrm{diag}(\mm_k)$ be the accepted metric, let $m_k^i$ and $\widehat m_k^i$ be the $i$th diagonal entries of $\MM_k$ and $\widehat\MM_k$, respectively, and let $s_k^i$ and $y_k^i$ be the $i$th entries of $\vs_k$ and $\yy_k$ defined above.  For diagonal metrics, the condition $\Delta(\MM_k,\widehat\MM_{k+1}) \leq d_{k+1}$ is equivalent to
\[
\frac{m_k^i}{d_{k+1}}\leq \widehat m_{k+1}^i\leq d_{k+1}m_k^i
\qquad(1\leq i\leq n).
\]

With this choice of $\mathbb{K}$, \eqref{eq:bb-obj-with-bounds} can be solved elementwise.
For $\omega>0$, and without constraints, the unique minimizer is
\[
p_k^i:=\argmin_{t\in\mathbb{R}}
\left\{\tfrac12(ts_k^i-y_k^i)^2+\tfrac{\omega}{2}(t-m_k^i)^2\right\}
=\frac{s_k^i y_k^i+\omega m_k^i}{(s_k^i)^2+\omega}.
\]
To enforce the bounds above, define $\operatorname{clip}_{[a,b]}(t):=\min\{b,\max\{a,t\}\}$ for $a\leq b$.  The solution of \eqref{eq:bb-obj-with-bounds} is then
\[
\widehat m_{k+1}^i
=\operatorname{clip}_{[m_k^i/d_{k+1},\,d_{k+1}m_k^i]}(p_k^i).
\]
The feasible set contains $\MM_k$, as required by \cref{lem:bb-closeness}.  Applying \cref{cor:truly-linear} with fixed $\omega>0$ also requires the uniform bounds on accepted metrics in \eqref{eq:metric-double-bounds}.
We next describe the practical BB clipping rule we use and prove that,
under the strong convexity assumption on $f$, its accepted metrics satisfy the uniform bounds in \eqref{eq:metric-double-bounds}.

\paragraph{Practical BB clipping rule}
We use an iteration-dependent regularization weight $\omega_k>0$, replacing $\omega$ by $\omega_k$ in the formula for $p_k^i$.
The metric update additionally uses scalar BB curvature estimates.  For $\langle\vs_k,\yy_k\rangle>0$, the BB step-size estimates are
\begin{equation}\label{bb}
\begin{split}
    \alpha_{\text{BB}1,k} &:= \|\vs_k\|_2^2 / \langle \vs_k,\yy_k \rangle,\\
    \alpha_{\text{BB}2,k} &:= \langle \vs_k, \yy_k \rangle / \|\yy_k\|_2^2. %
    \end{split}
\end{equation}

Set $a_k=(\alpha_{\text{BB}1,k})^{-1}$ and $b_k=(\alpha_{\text{BB}2,k})^{-1}$, so $0<a_k\leq b_k$ by Cauchy--Schwarz.  These estimate curvature along $\vs_k$.
While the BB interval $[a_k,b_k]$ provides a heuristic curvature range, each proposed diagonal entry must lie in the
closeness interval $[m_k^i/d_{k+1},d_{k+1}m_k^i]$, as required by line \ref{line:adapt-metric}.
For each coordinate $i$ with compatible BB and closeness bounds, we choose the proposed diagonal entry as the unique solution of the following regularized least-squares problem
\begin{equation}\label{bbopt}
\begin{aligned}
\widehat m_{k+1}^i
&:=\argmin_{t\in\mathbb{R}}
\left\{\frac12(ts_k^i-y_k^i)^2
+\frac{\omega_k}{2}(t-m_k^i)^2\right\}\\
&\text{subject to}\quad
a_k\leq t\leq b_k,\quad
\frac{m_k^i}{d_{k+1}}\leq t\leq d_{k+1}m_k^i.
\end{aligned}
\end{equation}

For a coordinate with disjoint intervals, \eqref{bbopt} is infeasible (see \Cref{rem:bb-clipping-counterexample}).
In this case, we compute each proposed diagonal entry by first clipping $p_k^i$ to the BB interval, then clipping the result to the
closeness interval:
\begin{equation}\label{eq:sequential-clipping}
\widehat m_{k+1}^i
=\operatorname{clip}_{[m_k^i/d_{k+1},\,d_{k+1}m_k^i]}
\bigl(\operatorname{clip}_{[a_k,b_k]}(p_k^i)\bigr).
\end{equation}
When the two intervals intersect, \eqref{eq:sequential-clipping} returns the solution of
\eqref{bbopt}: the objective in \eqref{bbopt} is a strictly convex quadratic in $t$ with
unconstrained minimizer $p_k^i$, so its constrained minimizer is $p_k^i$ clipped to the
intersection, which is what the two successive clips produce.  Hence
\eqref{eq:sequential-clipping} defines the proposal for every coordinate.

\paragraph{Convergence of the practical rule}
Under strong convexity of $f$, the BB interval and sequential clipping provide the uniform bounds on accepted metrics required by \eqref{eq:metric-double-bounds}, including the effects of backtracking.

\begin{lemma}[Uniform bounds for practical BB clipping]\label{lem:practical-bb-bounds}
Let \eqref{eq:proper} and \eqref{eq:L-lipschitz} hold, and assume that $f$ is $\mu_f$-strongly convex for $\mu_f>0$.
Suppose \vmtos starts with a diagonal accepted metric $\MM_0\in\SPD$ and selects proposals by \eqref{eq:sequential-clipping}, with any sequence $\omega_k>0$.
When $\vs_k=0$, reuse the most recent valid BB interval, with a fixed initial interval $[a_{\mathrm{init}},b_{\mathrm{init}}]\subset(0,\infty)$ before the first nonzero secant.
Then the accepted metrics satisfy \eqref{eq:metric-double-bounds} for constants $0<c\leq C<\infty$ independent of $k$.
\end{lemma}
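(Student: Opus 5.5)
Since $f$ is $\mu_f$-strongly convex and $L$-smooth, every nonzero secant pair satisfies $\mu_f\|\vs_k\|^2\le\langle\vs_k,\yy_k\rangle$ and $\|\yy_k\|^2\le L\langle\vs_k,\yy_k\rangle$ (co-coercivity). Hence $\mu_f\le a_k\le b_k\le L$. The same holds for the reused interval, with the initial interval giving $\underline{a}:=\min\{\mu_f,a_{\mathrm{init}}\}$ and $\overline{b}:=\max\{L,b_{\mathrm{init}}\}$. So $\operatorname{clip}_{[a_k,b_k]}(p_k^i)\in[\underline a,\overline b]$ for every $k$ and $i$, whatever $\omega_k$ is. The plan is to track each diagonal entry separately. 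Two effects move an entry: the closeness clip, which can only move an entry toward the BB interval or stop short of it, and backtracking, which multiplies all entries by $\tau$. I will prove an invariant by induction: each proposal entry $\widehat m_{k+1}^i$ lies in $[\min\{m_k^i,\underline a\},\max\{m_k^i,\overline b\}]$.

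For the induction step, let $q=\operatorname{clip}_{[a_k,b_k]}(p_k^i)\in[\underline a,\overline b]$. Clipping $q$ onto an interval containing $m_k^i$ gives a point between $m_k^i$ and $q$, since $[m_k^i/d,\,d m_k^i]\ni m_k^i$ and projection onto an interval containing $m_k^i$ moves toward $q$ without overshooting. This yields the invariant. Consequently the proposal entries are bounded below by $\min\{m_k^i,\underline a\}$, and lower bounds persist. Backtracking only increases entries, so the accepted entries satisfy $m_{k+1}^i\ge\widehat m_{k+1}^i\ge\min\{m_k^i,\underline a\}$. By induction, $m_k^i\ge c:=\min\{\min_i m_0^i,\underline a\}>0$.

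The upper bound is the main obstacle, because backtracking can inflate entries. First I bound the entries at which backtracking stops. By \cref{prop:backtrack}'s argument, the sufficient decrease test holds as soon as $\sigma\MM_k\succeq L\II$, since $f(\xx)\le f(\zz)+\langle\nabla f(\zz),\xx-\zz\rangle+\frac L2\|\xx-\zz\|^2$. For a diagonal metric this holds once $\min_i m^i\ge L/\sigma$. Backtracking is triggered only while some entry is below $L/\sigma$. The last multiplication by $\tau$ happens from a metric whose minimum entry is $<L/\sigma$, but the other entries could be large. So I bound instead the ratio of an accepted entry to its proposal: the number of backtracks $N$ satisfies $\tau^{N-1}\min_i\widehat m_{k+1}^i<L/\sigma$. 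Thus $\tau^{N}<\tau L/(\sigma c)$ by the lower bound, and each accepted entry satisfies $m_{k+1}^i\le (\tau L/(\sigma c))\,\widehat m_{k+1}^i$ whenever $N\ge1$.

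This multiplicative factor could compound across iterations. To prevent that, I combine it with the invariant: an entry above $\overline b$ never increases through proposals, since $\widehat m_{k+1}^i\le\max\{m_k^i,\overline b\}$. So the only growth above $\overline b$ comes from backtracking, and that needs to be controlled. I would argue that backtracking at iteration $k$ occurs only if the minimal proposal entry is $<L/\sigma$. Since all entries shrink at most by factor $d_{k+1}$ per step toward $[\underline a,\overline b]$, this fails to give uniformity directly. So I would refine the argument: define $C:=\max\{\max_i m_0^i,\overline b,\tau L/\sigma\}\cdot \tau L/(\sigma c)$ and show by induction that $m_k^i\le C$. Entries below $\tau L/\sigma$ can reach at most the stated bound after one backtracking episode. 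Entries above that threshold must be shown to arise only from an earlier episode and thereafter be nonincreasing until backtracking recurs. If this bookkeeping fails, I would fall back on noting that the scalar factor is shared by all entries, which keeps the spread $\max_i m_k^i/\min_i m_k^i$ bounded by $\max\{\overline b/c,\dots\}$. Combined with $\min_i m_k^i\le\max\{L/\sigma\cdot\tau,\overline b\}$ (backtracking stops right after crossing $L/\sigma$, and proposals never raise the minimum above $\max\{\min_i m_k^i,\overline b\}$), this yields $C$ as the product of these two bounds.
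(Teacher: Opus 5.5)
Your BB-interval bounds, the per-coordinate invariant $\min\{m_k^i,\underline a\}\le\widehat m_{k+1}^i\le\max\{m_k^i,\overline b\}$, and the resulting lower bound $c$ are correct and match the paper. The upper bound is \emph{not} established. Your first route is left unfinished (``must be shown'', ``if this bookkeeping fails''). Your fallback rests on a uniform bound for the spread $\max_i m_k^i/\min_i m_k^i$, but you justify it only for backtracking, where a common scalar multiplies every entry. The proposal step changes coordinates by different amounts, and your per-coordinate invariant does not control the spread. For example, with $m_k^1=x$, $m_k^2=Rx$ and $x>\overline b$, the invariant permits $\widehat m_{k+1}^1=\underline a$ and $\widehat m_{k+1}^2=Rx$, a ratio $Rx/\underline a>R$ that grows with $x$. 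Without an argument that the closeness-clipped proposal preserves the spread, the compounding of backtracking factors you worried about is not ruled out. The ``$\dots$'' in your constant reflects exactly this gap.

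The missing idea is that $T_d(m,t):=\operatorname{clip}_{[m/d,\,dm]}(t)$ is nondecreasing in both arguments and positively homogeneous, $T_d(rm,rt)=rT_d(m,t)$ for $r>0$. Write $\kappa(\MM):=\lambda_{\max}(\MM)/\lambda_{\min}(\MM)$, $t_k^i:=\operatorname{clip}_{[a_k,b_k]}(p_k^i)$, and $R:=\max\{\kappa(\MM_0),\overline b/\underline a\}$. Suppose $m_k^i\le Rm_k^j$ for all $i,j$. All coordinates share the same BB interval and the same $d_{k+1}$, so $t_k^i\le\overline b\le R\underline a\le Rt_k^j$. Hence $\widehat m_{k+1}^i=T_{d_{k+1}}(m_k^i,t_k^i)\le T_{d_{k+1}}(Rm_k^j,Rt_k^j)=R\,\widehat m_{k+1}^j$. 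Together with your shared-scalar remark, induction gives $\kappa(\MM_k)\le R$ for all $k$.

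With this in hand your fallback closes. First, $\lambda_{\min}(\MM_k)\le\max\{\lambda_{\min}(\MM_0),\overline b,\tau L/\sigma\}$; you dropped the $\MM_0$ term, which is needed for iterations before any backtracking occurs. Multiplying by $R$ gives $\lambda_{\max}(\MM_k)\le R\max\{\lambda_{\min}(\MM_0),\overline b,\tau L/\sigma\}$. The paper uses the same condition-number invariant but splits into cases. Without backtracking, your invariant gives $\lambda_{\max}(\MM_{k+1})\le\max\{\lambda_{\max}(\MM_k),\overline b\}$. With backtracking, $\lambda_{\max}(\MM_{k+1})\le R\lambda_{\min}(\MM_{k+1})<\tau LR/\sigma$.
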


\begin{proof}
See \cref{proof:practical-bb-bounds}.
\end{proof}

\begin{corollary}[R-linear convergence of the practical rule]\label{cor:practical-bb-linear}
Let the assumptions of \cref{lem:practical-bb-bounds} and \eqref{eq:solutions-exist} hold, and assume that $h^*$ is $\mu_h$-strongly convex for $\mu_h>0$.
Let $c,C$ be the uniform metric bounds from that lemma and set
\[
\beta:=\sqrt{1-\min\left\{\frac{\mu_f}{C},\frac{c\mu_h}{1+c\mu_h}\right\}}\in(0,1).
\]
For every $q\in(\beta,1)$, the iterates and residuals satisfy
\[
\|\xx_k-\xx^\star\|+\|\uu_k-\uu^\star\|\in O(q^k),
\qquad r(\xx_k,\uu_k)\in O(q^k).
\]
\end{corollary}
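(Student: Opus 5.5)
The plan is to chain together results already established: \cref{lem:practical-bb-bounds} supplies \eqref{eq:metric-double-bounds}, which lets \cref{thm:linear-convergence} and \cref{cor:quasilinear} apply, and the polylogarithmic factor is then absorbed into $q^k$.

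First, under the hypotheses of \cref{lem:practical-bb-bounds}, the accepted metrics satisfy $c\II\preceq\MM_k\preceq C\II$ for constants independent of $k$. Together with \eqref{eq:proper}, \eqref{eq:L-lipschitz}, \eqref{eq:solutions-exist} and strong convexity of $f$ and $h^*$, this gives every hypothesis of \cref{thm:linear-convergence}. The $\phi$ and $\theta$ defined there are exactly $\mu_f/C$ and $c\mu_h/(1+c\mu_h)$, so the $\beta$ in the statement is the one in \cref{cor:quasilinear}. Both quantities are positive. We also have $\theta<1$, and $\phi\le 1$ holds because strong convexity together with $L$-smoothness gives $\mu_f\le L$. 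I would bound $L$ against $C$ as follows: the accepted metric always satisfies the sufficient decrease test, and by strong convexity this forces $\tfrac{\sigma}{2}\|\cdot\|_{\MM_k}^2\ge\tfrac{\mu_f}{2}\|\cdot\|^2$ along $\xx_{k+1}-\zz_k$. That gives a convenient route to $\mu_f\le C$, and $C$ may be enlarged if necessary. Hence $\beta\in(0,1)$.

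The proposals are produced by \eqref{eq:sequential-clipping}, whose final clip enforces line~\ref{line:adapt-metric}. So \cref{thm:metric-distance} gives $\Delta_{0,k}\in O((\log k)^{2\rho})$, and \cref{cor:quasilinear} yields $d((\zz_k,\uu_k))$, $\|\xx_k-\xx^\star\|+\|\uu_k-\uu^\star\|$ and $r(\xx_k,\uu_k)$ all in $O((\log k)^{\rho}\beta^k)$. This holds possibly with exponent $2\rho$ in place of $\rho$, which is harmless. It would be even simpler to use $\Delta_{0,k}\le C/c$ directly from \eqref{eq:metric-double-bounds}, since $\Delta(\MM_0,\MM_k)\le C/c$. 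However, \eqref{eq:quasilinear-bound} is stated with the product $\Delta_{0,k+1}$, so I would cite \cref{thm:metric-distance}.

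Finally, for any $q\in(\beta,1)$ we have $(\log k)^{2\rho}(\beta/q)^k\to0$, so $(\log k)^{2\rho}\beta^k=O(q^k)$. This gives both claimed bounds.

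The main obstacle is confirming that \cref{thm:linear-convergence} applies verbatim to accepted (post-backtracking) metrics. This matters because the bounds $c,C$ of \cref{lem:practical-bb-bounds} already include backtracking. It also requires checking $\min\{\phi,\theta\}<1$ so that $\beta>0$; if $\phi\ge1$ occurred, one simply replaces $C$ by a larger constant.
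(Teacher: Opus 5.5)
Your proposal is correct and follows the paper's proof essentially verbatim: the uniform-bounds lemma supplies the metric bounds, the outer clip enforces the proposal bound, the quasilinear corollary then gives $O((\log k)^\rho\beta^k)$, and the polylogarithmic factor is absorbed into $q^k$. Your side worry about $\beta>0$ is unnecessary: $\theta<1$ already forces $\min\{\phi,\theta\}<1$, and the lemma's constant satisfies $C\geq\overline{b}\geq L\geq\mu_f$, so $\phi\leq 1$ as well. Hence you never need to enlarge $C$, which would in any case change the $\beta$ in the statement.
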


\begin{proof}
\Cref{lem:practical-bb-bounds} supplies \eqref{eq:metric-double-bounds}, and the outer clipping in \eqref{eq:sequential-clipping} enforces line \ref{line:adapt-metric} of \vmtos.
Thus \cref{cor:quasilinear} bounds both errors by $O((\log k)^\rho\beta^k)$.
For each $q\in(\beta,1)$, $(\log k)^\rho(\beta/q)^k\to0$, giving the claimed rates.
\end{proof}

This R-linear guarantee holds for varying positive $\omega_k$, and \cref{lem:practical-bb-bounds} guarantees the required uniform metric bounds.
The sharper $O(\beta^k)$ bound in \cref{cor:truly-linear} concerns \eqref{eq:bb-obj-with-bounds}, whose feasible set contains $\MM_k$.

\section{Numerical Experiments}\label{sec:experiments}
In this section we compare VMTOS using diagonal Barzilai--Borwein metric selection with TOS and ATOS.  All methods are implemented in a forked branch of the \emph{copt} package \citep{copt}.
\makeatletter
\if@accepted
The code for the experiments is available upon request.
\else
The source code and data for reproducing the experiments are included in the supplementary materials.
\fi
\makeatother

We consider tomography imaging with total variation regularization, overlapping group lasso, portfolio optimization, and a synthetic anisotropic example.
We take $P^\star$ to be a dual lower bound on the optimal value; for the portfolio problems, we instead measure the distance to the optimal solution $\xx^\star$.

\paragraph{Solver configuration.}
For all experiments, all solvers start from the step size $1/L$.  VMTOS uses BB regularization $\omega_k = \|\vs_k\|^2/n+\varepsilon_{\min}$, $\sigma=0.99$, and $\tau=1/0.7$, where $\varepsilon_{\min}>0$ is a small positive constant to avoid division by zero in the metric update.
We use $\rho=10$ throughout.
We use BB clipping as described in \cref{sec:diagvm}, without imposing an additional upper bound on the accepted metrics.
We set the maximum step-size growth factor for ATOS to $1.02$.
For residual monitoring and stopping, we use the square root of the right-hand side of \eqref{eq:first-r-bound}, with the tolerance applied to its current value.

\subsection{Tomography Image Recovery with Total Variation Regularization}\label{sec:e1}
We recover a $50 \times 50$ Shepp--Logan phantom \citep{shepp1974fourier} from simulated X-ray computed tomography (CT) measurements by solving
\begin{equation}\label{eq:tvct}
     \underset{\xx \in \mathbb{R}^n}{\text{minimize }}  \underbrace{\frac{1}{2}\|\boldsymbol{A} \xx-\bb\|_2^2}_{f(\xx)} + \underbrace{\lambda\, \mathrm{TV}_{\mathrm{r}}(\xx) + \iota_{\xx \geq 0}(\xx)}_{g(\xx)} + \underbrace{\lambda\, \mathrm{TV}_{\mathrm{c}}(\xx)}_{h(\xx)},
\end{equation}
where $\boldsymbol{A} \in \mathbb{R}^{m \times n}$ with $m=1440$, $n=2500$ is the CT system matrix from TAO's tomography example \citep{dener2021tao}, $\bb$ is the sinogram with $1\%$ additive Gaussian noise, and $\mathrm{TV}_{\mathrm{r}}$ ($\mathrm{TV}_{\mathrm{c}}$) denotes the sum of one-dimensional total variations of the rows (columns) of the image.
The weighted proximal operators are described in \cref{supp:diagvm}.
We use $\lambda=10^{-8}$.

\subsection{Overlapping Group Lasso on RCV1}\label{sec:e3}
We consider logistic regression with overlapping group lasso regularization \citep{jacob2009group}.  The groups are divided into two collections, each containing disjoint groups:
\begin{equation}\label{eq:grouplasso}
    \underset{\xx \in \mathbb{R}^n}{\text{minimize }} \underbrace{\frac{1}{n_d} \sum_{i=1}^{n_d} \log(1+ \exp (-b_i \va_i^T \xx))}_{f(\xx)}
    + \underbrace{\lambda \sum_{G \in \mathbb{G}} \| [\xx]_G \|}_{g(\xx)}
    + \underbrace{\lambda \sum_{H \in \mathbb{H}} \| [\xx]_H \|}_{h(\xx)},
\end{equation}
with groups of size 10 and stride 8, $\{1,\dots,10\}, \{9,\dots,18\}, \dots$, split alternately into $\mathbb{G}$ (odd groups) and $\mathbb{H}$ (even groups),
exactly as in \citet[Appendix~E.3]{pedregosa2018adaptive}.
We use the RCV1 text categorization dataset \citep{lewis2004rcv1}, consisting of $n_d=23{,}149$ documents with feature vectors $\va_i\in\mathbb{R}^n$, restricted to the $n=10{,}002$ features with the highest document frequency, and binary labels $b_i\in\{-1,+1\}$ ($+1$ for CCAT, $-1$ otherwise).
The weighted proximal operators are described in \cref{supp:diagvm}.
We use $\lambda=3.249\cdot10^{-5}$.

\subsection{Portfolio Optimization}\label{sec:e2}
We consider the portfolio optimization problem of \citet[Section~5.4]{davis2017three}: minimizing investment risk over the unit simplex subject to a minimum expected return,
\begin{equation}\label{eq:port}
\begin{aligned}
\underset{\xx \in \mathbb{R}^n}{\text{minimize}} \quad & \frac{1}{2}\xx^T \boldsymbol{C} \xx \\
\text{subject to} \quad & \sum_{i=1}^n \nu_i x_i \geq r, \quad \sum_{i=1}^n x_i = 1, \quad \xx \geq 0.
\end{aligned}
\end{equation}
Its three operator formulation is
\begin{equation}\label{eq:port3}
\underset{\xx \in \mathbb{R}^n}{\text{minimize }} \underbrace{\frac{1}{2} \xx^T \boldsymbol{C} \xx}_{f(\xx)}  + \underbrace{\iota_1(\xx)}_{g(\xx)} + \underbrace{\iota_2(\xx)}_{h(\xx)}.
\end{equation}
Here $\iota_1$ and $\iota_2$ are the convex indicator functions of the unit simplex and halfspace, respectively.  Their proximal operators are the corresponding metric projections.

For projection onto the unit simplex under a diagonal metric (a continuous quadratic knapsack problem), see \citet[\S3.2]{park2020variable_arxiv} and \citet{kiwiel2008breakpoint}.
For projection onto a halfspace under a diagonal metric, see \citet[Example~29.20]{Bauschke2017}.

We use daily returns weighted by market value from the 100 Fama--French portfolios formed on size and book to market (1945--2026; $15{,}897$ days with complete data for all 100 portfolios) \citep{frenchdata}.  We estimate the covariance matrix $\boldsymbol{C}$ and mean return vector $\boldsymbol{\nu}$ from these data and set $r=n^{-1}\sum_{i=1}^n\nu_i$, following \citet[Section~5.1]{yurtsever2016stochastic}.  We also use a synthetic instance with $1000$ assets, generated as in \citet[Section~5.4]{davis2017three}, with $r=0.15$.

\subsection{Synthetic Anisotropic Example}\label{sec:e4}
We investigate the performance of variable metrics on a synthetic problem with an anisotropic Hessian.

We use the overlapping group penalties of \cref{sec:e3}, with $f(\xx) = \frac{1}{2}\|\boldsymbol{A}\xx - \bb\|^2$ and $\lambda=10^{-2}$.
We generate $\boldsymbol{A} = \boldsymbol{R}\,\mathrm{diag}(\boldsymbol{d})$, where $\boldsymbol{R}\in\mathbb{R}^{m\times n}$ is obtained by normalizing
the columns of a Gaussian matrix ($n=482$, $m=674$), and the entries $d_i$ are logarithmically spaced on $[1,\sqrt{\kappa}]$.
By construction, the ratio of the largest to smallest diagonal entry of $\boldsymbol{A}^T\boldsymbol{A}$ equals the chosen $\kappa$ value.
We generate $\bb$ from a signal with group sparsity and add Gaussian noise with standard deviation $0.01$.
We test $\kappa\in\{10^1,\ldots,10^5\}$, with ten random instances for each value and orientation.  For this example, the goal is to reduce the optimality gap $P(\xx_k)-P^\star$ to $10^{-4}$ of its initial value within $10^4$ iterations.

Following the rotated benchmark comparisons of \citet{auger2009experimental}, we also examine sensitivity to coordinate orientation by replacing $\boldsymbol A$ with $\boldsymbol A\boldsymbol Q$, where $\boldsymbol Q$ is a random orthogonal matrix \citep{mezzadri2006generate}.
This preserves the Hessian eigenvalues while changing its orientation relative to the coordinate axes.  The group lasso penalties are unchanged.

\subsection{Results}

\begin{figure*}
\begin{center}
\begin{tikzpicture}[yscale=0.8,xscale=0.8]
\begin{groupplot}[group style={group size=2 by 2, horizontal sep = 2cm, vertical sep= 2cm}, ymode=log, unbounded coords=jump, xlabel={Iterations},grid=both,grid style=dashed, no markers]

    \nextgroupplot[title={TV-CT, $\lambda=10^{-8}$},ylabel={$P(\xx_k)-P^\star$},xmin=0,ymin=1e-16,ymax=1e-1]
    \addplot[magenta, each nth point=2] table[x=iter,y=y]{data/certified-references-2026-09-28/fig_e1_convergence_vmtosls.txt};
            \addlegendentry{VMTOS}
    \addplot[red, dashed, each nth point=2] table[x=iter,y=y]{data/certified-references-2026-09-28/fig_e1_convergence_atos.txt};
            \addlegendentry{ATOS}
    \addplot[black, densely dashdotted, each nth point=2] table[x=iter,y=y]{data/certified-references-2026-09-28/fig_e1_convergence_tos.txt};
            \addlegendentry{TOS}

    \nextgroupplot[title={RCV1, $\lambda=3.249\cdot10^{-5}$},ylabel={$P(\xx_k)-P^\star$},xmin=0,xmax=1500,xtick={0,500,1000,1500},ymin=1e-13,ymax=1e2]
    \addplot[magenta, each nth point=2] table[x=iter,y=y]{data/certified-references-2026-09-28/fig_e3_convergence_3.249e-05_vmtosls.txt};
            \addlegendentry{VMTOS}
    \addplot[red, dashed, each nth point=2] table[x=iter,y=y]{data/certified-references-2026-09-28/fig_e3_convergence_3.249e-05_atos.txt};
            \addlegendentry{ATOS}
    \addplot[black, densely dashdotted, each nth point=2] table[x=iter,y=y]{data/certified-references-2026-09-28/fig_e3_convergence_3.249e-05_tos.txt};
            \addlegendentry{TOS}

    \nextgroupplot[title={Portfolio (FF100)},ylabel={$\|\xx_k - \xx^\star\|$},xmin=0,xmax=350,ymin=1e-14,ymax=1e1,legend style={at={(0.5,0.03)},anchor=south}]
    \addplot[magenta] table[x=iter,y=y]{data/portfolio-targets-2026-09-29/fig_e2_convergence_ff100_vmtosls.txt};
            \addlegendentry{VMTOS}
    \addplot[red, dashed] table[x=iter,y=y]{data/portfolio-targets-2026-09-29/fig_e2_convergence_ff100_atos.txt};
            \addlegendentry{ATOS}
    \addplot[black, densely dashdotted, each nth point=10] table[x=iter,y=y]{data/portfolio-targets-2026-09-29/fig_e2_convergence_ff100_tos.txt};
            \addlegendentry{TOS}

    \nextgroupplot[title={Portfolio (synthetic, $n=1000$)},ylabel={$\|\xx_k - \xx^\star\|$},xmin=0,ymin=1e-14,ymax=1e1]
    \addplot[magenta, each nth point=10] table[x=iter,y=y]{data/portfolio-targets-2026-09-29/fig_e2_convergence_synth1000_vmtosls.txt};
            \addlegendentry{VMTOS}
    \addplot[red, dashed, each nth point=10] table[x=iter,y=y]{data/portfolio-targets-2026-09-29/fig_e2_convergence_synth1000_atos.txt};
            \addlegendentry{ATOS}
    \addplot[black, densely dashdotted, each nth point=10] table[x=iter,y=y]{data/portfolio-targets-2026-09-29/fig_e2_convergence_synth1000_tos.txt};
            \addlegendentry{TOS}

\end{groupplot}
\end{tikzpicture}
\end{center}
\caption{Convergence histories for TOS, ATOS, and VMTOS. $P^\star$ is a dual lower bound, and $\xx^\star$ is the optimal solution.}
\label{plt:plots}
\end{figure*}

\begin{figure*}
\begin{center}
\begin{tikzpicture}[yscale=0.8,xscale=0.8]
\begin{groupplot}[group style={group size=2 by 1, horizontal sep = 2cm}, xmode=log, ymode=log, xlabel={Hessian diagonal ratio $\kappa$\\(before rotation)}, xlabel style={align=center}, ymin=40, ymax=35000, ytick={100,1000,10000,20000}, yticklabels={$10^2$,$10^3$,$10^4$,NC}, grid=major, grid style=dashed, no markers]

    \nextgroupplot[title={Original coordinates}, ylabel={Iterations}, legend pos=north west]
    \addplot[magenta, mark=*, mark size=1.5pt, error bars/.cd, y dir=both, y explicit] table[x=kappa, y=med, y error minus expr={\thisrow{med}-\thisrow{lo}}, y error plus expr={\thisrow{hi}-\thisrow{med}}]{data/certified-references-2026-09-28/e4_axis_vmtosls.txt};
            \addlegendentry{VMTOS}
    \addplot[red, dashed, mark=triangle*, mark size=1.5pt, error bars/.cd, y dir=both, y explicit] table[x=kappa, y=med, y error minus expr={\thisrow{med}-\thisrow{lo}}, y error plus expr={\thisrow{hi}-\thisrow{med}}]{data/certified-references-2026-09-28/e4_axis_atos.txt};
            \addlegendentry{ATOS}
    \addplot[black, densely dashdotted, mark=diamond*, mark size=1.5pt, error bars/.cd, y dir=both, y explicit] table[x=kappa, y=med, y error minus expr={\thisrow{med}-\thisrow{lo}}, y error plus expr={\thisrow{hi}-\thisrow{med}}]{data/certified-references-2026-09-28/e4_axis_tos.txt};
            \addlegendentry{TOS}
    \addplot[gray, dotted, domain=1e1:1e5] {10000};

    \nextgroupplot[title={After random rotation}]
    \addplot[magenta, mark=*, mark size=1.5pt, error bars/.cd, y dir=both, y explicit] table[x=kappa, y=med, y error minus expr={\thisrow{med}-\thisrow{lo}}, y error plus expr={\thisrow{hi}-\thisrow{med}}]{data/certified-references-2026-09-28/e4_rot_vmtosls.txt};
    \addplot[red, dashed, mark=triangle*, mark size=1.5pt, error bars/.cd, y dir=both, y explicit] table[x=kappa, y=med, y error minus expr={\thisrow{med}-\thisrow{lo}}, y error plus expr={\thisrow{hi}-\thisrow{med}}]{data/certified-references-2026-09-28/e4_rot_atos.txt};
    \addplot[black, densely dashdotted, mark=diamond*, mark size=1.5pt, error bars/.cd, y dir=both, y explicit] table[x=kappa, y=med, y error minus expr={\thisrow{med}-\thisrow{lo}}, y error plus expr={\thisrow{hi}-\thisrow{med}}]{data/certified-references-2026-09-28/e4_rot_tos.txt};
    \addplot[gray, dotted, domain=1e1:1e5] {10000};

\end{groupplot}
\end{tikzpicture}
\end{center}
\caption{Synthetic anisotropic example (\cref{sec:e4}): The horizontal axis gives the ratio of the largest to smallest Hessian diagonal entry before rotation. Points show medians over ten random instances, and bars show their range. NC denotes a run that did not converge within $10^4$ iterations. If at least five of ten runs are NC, the median is also shown as NC.}
\label{plt:e4}
\end{figure*}

\begin{figure*}
\begin{center}
\begin{tikzpicture}[yscale=0.8,xscale=0.8]
\begin{groupplot}[group style={group size=2 by 1, horizontal sep = 2.2cm}, xlabel={Iterations}, grid=both, grid style=dashed, no markers]

    \nextgroupplot[title={TV-CT, $\lambda=10^{-8}$}, ylabel={Cumulative backtracking steps}, legend pos=north west]
    \addplot[magenta, each nth point=2] table[x=iter,y=cumback]{data/three-solvers-2026-09-21/mech_backtracks_vmtosls.txt};
            \addlegendentry{VMTOS}
    \addplot[red, dashed, each nth point=2] table[x=iter,y=cumback]{data/three-solvers-2026-09-21/mech_backtracks_atos.txt};
            \addlegendentry{ATOS}

    \nextgroupplot[title={RCV1, $\lambda=3.249\cdot10^{-5}$}, ylabel={Cumulative backtracking steps}, legend pos=north west]
    \addplot[magenta, each nth point=2] table[x=iter,y=cumback]{data/three-solvers-2026-09-21/mech_backtracks_rcv1_vmtosls.txt};
            \addlegendentry{VMTOS}
    \addplot[red, dashed, each nth point=2] table[x=iter,y=cumback]{data/three-solvers-2026-09-21/mech_backtracks_rcv1_atos.txt};
            \addlegendentry{ATOS}

\end{groupplot}
\end{tikzpicture}
\end{center}
\caption{Cumulative backtracking steps for tomography (left) and RCV1 (right).  On tomography, VMTOS backtracks $22$ times and ATOS $164$ times over $3000$ iterations.  On RCV1, the counts are $21$ and $74$, respectively, over $1500$ iterations.}
\label{plt:mech}
\end{figure*}

\cref{plt:plots} compares convergence on the tomography, RCV1, and two portfolio problems.
In all four instances, VMTOS achieves comparable accuracy in fewer iterations than TOS and ATOS.

\cref{plt:e4} shows that, in the original coordinates, VMTOS reached the target in substantially fewer iterations than TOS and ATOS, consistent with its diagonal metric adapting to differences in curvature along the coordinate axes.
At $\kappa=10^3$, the median counts were $139.5$ iterations for VMTOS and $2095$ for ATOS.
After rotation, the corresponding medians were $1505.5$ and $1855$ iterations.
Rotation preserves the Hessian eigenvalues but mixes the coordinate directions, making the anisotropy harder to capture with a diagonal metric.

\Cref{plt:mech} shows that VMTOS requires fewer backtracking steps than ATOS on both the tomography and RCV1 examples.

\Cref{plt:oracle} shows the same error histories as \cref{plt:plots} against cumulative gradient and proximal evaluations.  Counts include rejected trials and the additional gradients used to compute residuals, assigning unit cost to each evaluation.
VMTOS also achieves comparable accuracy with fewer gradient and proximal evaluations than TOS and ATOS.

\begin{figure*}
\begin{center}
\begin{tikzpicture}[yscale=0.8,xscale=0.8]
\begin{groupplot}[group style={group size=2 by 2, horizontal sep = 2cm, vertical sep= 2cm}, ymode=log, unbounded coords=jump, xlabel={Oracle calls ($\#\nabla f + \#\prox$)},grid=both,grid style=dashed, no markers]

    \nextgroupplot[title={TV-CT, $\lambda=10^{-8}$},ylabel={$P(\xx_k)-P^\star$},xmin=0,ymin=1e-16,ymax=1e-1]
    \addplot[magenta, each nth point=2] table[x=oracle,y=y]{data/certified-references-2026-09-28/fig_e1_convergence_vmtosls.txt};
            \addlegendentry{VMTOS}
    \addplot[red, dashed, each nth point=2] table[x=oracle,y=y]{data/certified-references-2026-09-28/fig_e1_convergence_atos.txt};
            \addlegendentry{ATOS}
    \addplot[black, densely dashdotted, each nth point=2] table[x=oracle,y=y]{data/certified-references-2026-09-28/fig_e1_convergence_tos.txt};
            \addlegendentry{TOS}

    \nextgroupplot[title={RCV1, $\lambda=3.249\cdot10^{-5}$},ylabel={$P(\xx_k)-P^\star$},xmin=0,ymin=1e-13,ymax=1e2]
    \addplot[magenta, each nth point=2] table[x=oracle,y=y]{data/certified-references-2026-09-28/fig_e3_convergence_3.249e-05_vmtosls.txt};
            \addlegendentry{VMTOS}
    \addplot[red, dashed, each nth point=2] table[x=oracle,y=y]{data/certified-references-2026-09-28/fig_e3_convergence_3.249e-05_atos.txt};
            \addlegendentry{ATOS}
    \addplot[black, densely dashdotted, each nth point=2] table[x=oracle,y=y]{data/certified-references-2026-09-28/fig_e3_convergence_3.249e-05_tos.txt};
            \addlegendentry{TOS}

    \nextgroupplot[title={Portfolio (FF100)},ylabel={$\|\xx_k - \xx^\star\|$},xmin=0,xmax=1400,ymin=1e-14,ymax=1e1,legend style={at={(0.5,0.03)},anchor=south}]
    \addplot[magenta] table[x=oracle,y=y]{data/portfolio-targets-2026-09-29/fig_e2_convergence_ff100_vmtosls.txt};
            \addlegendentry{VMTOS}
    \addplot[red, dashed] table[x=oracle,y=y]{data/portfolio-targets-2026-09-29/fig_e2_convergence_ff100_atos.txt};
            \addlegendentry{ATOS}
    \addplot[black, densely dashdotted, each nth point=10] table[x=oracle,y=y]{data/portfolio-targets-2026-09-29/fig_e2_convergence_ff100_tos.txt};
            \addlegendentry{TOS}

    \nextgroupplot[title={Portfolio (synthetic, $n=1000$)},ylabel={$\|\xx_k - \xx^\star\|$},xmin=0,ymin=1e-14,ymax=1e1]
    \addplot[magenta, each nth point=10] table[x=oracle,y=y]{data/portfolio-targets-2026-09-29/fig_e2_convergence_synth1000_vmtosls.txt};
            \addlegendentry{VMTOS}
    \addplot[red, dashed, each nth point=10] table[x=oracle,y=y]{data/portfolio-targets-2026-09-29/fig_e2_convergence_synth1000_atos.txt};
            \addlegendentry{ATOS}
    \addplot[black, densely dashdotted, each nth point=10] table[x=oracle,y=y]{data/portfolio-targets-2026-09-29/fig_e2_convergence_synth1000_tos.txt};
            \addlegendentry{TOS}

\end{groupplot}
\end{tikzpicture}
\end{center}
\caption{The convergence histories of \cref{plt:plots} plotted against cumulative gradient and proximal evaluations, including rejected line-search trials and the additional gradients used to compute residuals.}
\label{plt:oracle}
\end{figure*}

\section{Conclusion}
We presented VMTOS, a variable-metric extension of three operator splitting, and established a $\tilde{O}(k^{-1/2})$ bound on the smallest saddle-point residual among the first $k$ primal-dual iterates by controlling metric changes.
Under strong convexity of $f$ and $h^*$ and uniform metric bounds, the constrained Barzilai--Borwein rule with fixed regularization improves the iterate bound from $O((\log k)^\rho\beta^k)$ to $O(\beta^k)$.
Our practical diagonal BB rule guarantees the required uniform metric bounds when $f$ is strongly convex, yielding R-linear convergence when $h^*$ is also strongly convex.

On the tested problems, VMTOS achieved comparable accuracy with fewer iterations and fewer gradient and proximal evaluations than TOS and ATOS.
The synthetic experiments showed that the benefit of diagonal adaptation depends on coordinate orientation.
Future work could explore metrics consisting of a diagonal matrix plus a rank-one correction and extend the variable-metric framework to sums of more than three convex functions \citep{ryu2020uniqueness,malitsky2023resolvent}.

\bibliography{vmtos_tmlr}
\bibliographystyle{tmlr}

\newpage
\appendix
\onecolumn
\section{Proofs}

\subsection{Proof of \texorpdfstring{\cref{prop:backtrack}}{backtrack}}
\label{proof:backtrack}

\begin{proof}
Suppose $L>0$, without loss of generality.  An $L$-smooth convex function satisfies
$f(\xx) \leq f(\zz) + \langle \nabla f(\zz), \xx - \zz \rangle + \frac{L}{2} \|\xx - \zz\|^2$ for all $\xx, \zz \in \mathbb{R}^n.$
Therefore sufficient decrease is achieved if $L \|\xx_{k+1} - \zz_k\|^2 \leq \sigma \|\xx_{k+1} - \zz_k\|_{\MM_k}^2$.  After backtracking $\ell$ times on line \ref{line:strengthen-metric}, $\MM_k = \tau^\ell \widehat\MM_k$, so \[\frac{\sigma}{2}\|\xx_{k+1} - \zz_k \|_{\MM_k}^2 \geq \frac{\sigma}{2} \tau^\ell \lambda_{\min}(\widehat\MM_k)\|\xx_{k+1} - \zz_k\|^2,\]
where $\lambda_{\min}(\widehat\MM_k) > 0$ is the smallest eigenvalue of $\widehat\MM_k$.
Thus sufficient decrease holds once $\sigma \tau^\ell \lambda_{\min}(\widehat\MM_k) \geq L$, so the number of backtracking steps satisfies
\begin{equation}\label{eq:backtrack-bound}
\ell \leq \max\{0,\lceil\log_{\tau} (L / (\sigma \lambda_{\min}(\widehat\MM_k))) \rceil\}.
\end{equation}
\end{proof}

\subsection{Proof of \texorpdfstring{\cref{lem:keyrecursive}}{keyrecursive} (recursive inequality)}
\label{proof:keyrecursive}

\begin{proof}
We follow the proof of Lemma 4 in
\citet{pedregosa2018adaptive}.
Let $(\xx,\uu)\in\Omega$ be given.
The sufficient decrease condition on line \ref{line:sufficient-decrease} can be written as
\[
    f(\xx_{k+1}) - f(\zz_k) \leq \langle \nabla f(\zz_k), \xx_{k+1}-\zz_k \rangle + \frac{1}{2}\|\xx_{k+1}-\zz_k\|_{\MM_k}^2 - \delta_k,
\]
where $\delta_k = \frac{(1-\sigma)}{2} \|\xx_{k+1} - \zz_k\|_{\MM_k}^2$.
Combining this with the definition of $\mu_f$-strong convexity, we have
\begin{equation}\label{56}
f(\xx_{k+1})-f(\xx) \leq \langle \nabla f(\zz_k), \xx_{k+1}-\xx \rangle + \frac{1}{2}\|\zz_k-\xx_{k+1}\|_{\MM_k}^2 - \frac{\mu_f}{2}\|\zz_k-\xx\|_2^2 - \delta_k. 
\end{equation}
Equation \eqref{eq:tos-iter-prox-g} implies the inclusion
$\MM_k (\zz_k - \MM_k^{-1}(\uu_k + \nabla f(\zz_k)) - \xx_{k+1})  \in \partial g(\xx_{k+1})$,
and so
\begin{align}
    g(\xx_{k+1})-g(\xx) &\leq  \langle \MM_k \zz_k - (\uu_k + \nabla f(\zz_k)) - \MM_k \xx_{k+1}, \xx_{k+1} - \xx \rangle \\
    &= \langle \MM_k (\zz_k - \xx_{k+1}),\xx_{k+1}-\xx \rangle - \langle \uu_k + \nabla f(\zz_k), \xx_{k+1}-\xx \rangle. \label{59}
\end{align}
Adding \eqref{56} and \eqref{59}, we obtain
\begin{multline}
    f(\xx_{k+1})+ g(\xx_{k+1})-f(\xx)-g(\xx) \\ \leq
     \langle \MM_k (\zz_k - \xx_{k+1}),\xx_{k+1}-\xx \rangle 
    + \frac{1}{2}\|\zz_k-\xx_{k+1}\|_{\MM_k}^2
    - \langle \uu_k, \xx_{k+1}-\xx \rangle 
    - \frac{\mu_f}{2}\|\zz_k - \xx\|_2^2 - \delta_k.
\end{multline}
As in \citet{pedregosa2018adaptive}, we use the above inequality to bound $\LL(\xx_{k+1},\uu_{k+1})-\LL(\xx,\uu_{k+1})$.  Noting the primal-dual relationship $\uu_{k+1}-\uu_k=\MM_k(\xx_{k+1}-\zz_{k+1})$, and that the cosine identity holds for any inner-product norm, $2 \langle\MM \va,\bb \rangle = \|\va+\bb\|_{\MM}^2 - \|\va\|_{\MM}^2 - \|\bb\|_{\MM}^2$, we have the following:
\begin{align}
&\LL(\xx_{k+1},\uu_{k+1})- \LL(\xx,\uu_{k+1}) = f(\xx_{k+1})+ g(\xx_{k+1})-f(\xx)-g(\xx)  + \langle \uu_{k+1} , \xx_{k+1}-\xx \rangle \\
\leq{} &\langle \MM_k (\zz_k - \xx_{k+1}), \xx_{k+1}-\xx \rangle + \frac{1}{2}\|\zz_k-\xx_{k+1}\|_{\MM_k}^2 + \langle \uu_{k+1}-\uu_k ,\xx_{k+1}-\xx\rangle 
- \frac{\mu_f}{2}\|\zz_k-\xx\|_2^2 - \delta_k\\
={} &\frac{1}{2}\|\zz_k-\xx\|_{\MM_k}^2-\frac{1}{2}\| \zz_{k+1}-\xx\|_{\MM_k}^2 + \frac{1}{2}\|\zz_{k+1}-\xx_{k+1}\|_{\MM_k}^2 - \frac{\mu_f}{2}\|\zz_k-\xx\|_2^2 -\delta_k. \label{64}
\end{align}

Turning to $h^*$, equations \eqref{eq:tos-iter-prox-h} and \eqref{eq:tos-iter-dual-update} together with the subgradient inverse relationship \eqref{eq:subdiff-inverse} imply the inclusion
$\zz_{k+1} \in \partial h^*(\uu_{k+1})$, and so with $\mu_h$-strong convexity we have
\begin{equation}
    h^*(\uu_{k+1}) - h^*(\uu) \leq \langle \zz_{k+1}, \uu_{k+1}-\uu \rangle - \frac{\mu_h}{2}\|\uu_{k+1}-\uu\|_2^2.
\end{equation}
With this we bound $\LL(\xx_{k+1},\uu) - \LL(\xx_{k+1},\uu_{k+1})$:
\begin{align}
&\LL(\xx_{k+1},\uu) - \LL(\xx_{k+1},\uu_{k+1}) = \langle \uu - \uu_{k+1}, \xx_{k+1} \rangle - h^*(\uu) + h^*(\uu_{k+1}) \\
\leq{} &\langle \uu_{k+1}-\uu, \zz_{k+1}-\xx_{k+1} \rangle - \frac{\mu_h}{2} \|\uu_{k+1}-\uu\|_2^2\\
={} &\langle \uu_{k+1}-\uu , \MM_k^{-1} (\uu_k - \uu_{k+1}) \rangle - \frac{\mu_h}{2} \|\uu_{k+1}-\uu\|_2^2\\
={} &\frac{1}{2}\|\uu_k -\uu\|_{\MM_k^{-1}}^2 - \frac{1}{2}\|\uu_k-\uu_{k+1}\|_{\MM_k^{-1}}^2 - \frac{1}{2}\|\uu_{k+1}-\uu\|_{\MM_k^{-1}}^2 - \frac{\mu_h}{2} \|\uu_{k+1}-\uu\|_2^2 \\
={} &\frac{1}{2}\|\uu_k -\uu\|_{\MM_k^{-1}}^2 - \frac{1}{2}\|\xx_{k+1} - \zz_{k+1}\|_{\MM_k}^2 - \frac{1}{2}\|\uu_{k+1}-\uu\|_{\MM_k^{-1}}^2 - \frac{\mu_h}{2} \|\uu_{k+1}-\uu\|_2^2.
\label{71}
\end{align}
Adding together \eqref{64} and \eqref{71}, we have the desired bound of the lemma, 
\begin{multline}
2(\LL(\xx_{k+1},\uu)-\mathcal{L}(\xx,\uu_{k+1})) \\ \leq
\|\zz_k-\xx\|_{\MM_k}^2 - \mu_f \|\zz_k-\xx\|_2^2 - \|\zz_{k+1}-\xx\|_{\MM_k}^2 - (1 - \sigma) \|\zz_k - \xx_{k+1}\|_{\MM_k}^2  \\
+\|\uu_k-\uu\|_{\MM_{k}^{-1}}^2 - \mu_h \|\uu_{k+1} - \uu\|_2^2 - \|\uu_{k+1} - \uu\|_{\MM_k^{-1}}^2.
\end{multline}
\end{proof}

\subsection{Proof of \texorpdfstring{\cref{thm:metric-distance} ($\Delta_{0,k} \in O((\log k)^{2\rho})$}{Dlog})}
\label{proof:metric-distance}

We start with a lemma that bounds $\prod_{j=1}^k d_j$.

\begin{lemma}\label{lem:d-prod}
Given $\rho \geq 0$,
\[\prod_{j=1}^k \left(1 + \frac{\rho}{j(1 + \log j)}\right) \in O((\log k)^\rho).\]
\end{lemma}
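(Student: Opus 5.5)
Since every factor in the product is at least $1$ and the claim is asymptotic, the plan is to pass to logarithms and bound the resulting sum. Using $\log(1+t)\leq t$ for $t\geq 0$, I will show
\[
\log \prod_{j=1}^k \left(1 + \frac{\rho}{j(1+\log j)}\right) = \sum_{j=1}^k \log\left(1 + \frac{\rho}{j(1+\log j)}\right) \leq \rho \sum_{j=1}^k \frac{1}{j(1+\log j)}.
\]
The case $\rho=0$ is trivial (the product equals $1$), so I will assume $\rho>0$. The estimate then reduces to showing that $S_k := \sum_{j=1}^k \frac{1}{j(1+\log j)} \leq \log\log k + C$ for some constant $C$ and all $k\geq 2$.

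To bound $S_k$, I will compare with an integral. The function $t\mapsto \frac{1}{t(1+\log t)}$ is positive and decreasing on $[1,\infty)$. Hence for $j\geq 2$ the term $\frac{1}{j(1+\log j)}$ is at most $\int_{j-1}^{j}\frac{dt}{t(1+\log t)}$. Treating the $j=1$ term (which equals $1$) separately gives
\[
S_k \leq 1 + \int_1^k \frac{dt}{t(1+\log t)} = 1 + \log(1+\log k).
\]
This follows because the antiderivative of $\frac{1}{t(1+\log t)}$ is $\log(1+\log t)$, which vanishes at $t=1$.

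Exponentiating gives the bound
\[
\prod_{j=1}^k d_j \leq \exp\bigl(\rho + \rho\log(1+\log k)\bigr) = e^{\rho}(1+\log k)^{\rho}.
\]
For $k\geq 3$ we have $\log k\geq 1$, so $1+\log k\leq 2\log k$ and the product is at most $(2e)^\rho(\log k)^\rho$. Therefore the product lies in $O((\log k)^\rho)$.

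The only point needing care is the small-$k$ and base-of-$\log$ bookkeeping. In particular, $(\log k)^\rho$ vanishes at $k=1$, so the $O$-statement is interpreted as $k\to\infty$, or equivalently it holds for $k\geq 3$ with an explicit constant. I do not expect any real obstacle: the integral comparison for the decreasing integrand is the main step, and it is standard.
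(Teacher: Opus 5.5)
Your proof is correct and follows the same route as the paper: take logarithms, apply $\log(1+y)\le y$, and bound the resulting sum by $\int_1^k \frac{dx}{x(1+\log x)} = \log(1+\log k)$, using that the summand is decreasing. You are slightly more explicit than the paper: you identify its unspecified constant $\tilde\gamma$ as $1$, and you convert $(1+\log k)^\rho$ to $2^\rho(\log k)^\rho$ for $k\ge 3$, a step the paper leaves implicit.
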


\begin{proof}
Taking the logarithm of the product and using the bound $\log(1+y) \leq y$, we see
\begin{equation}
\log\left\{\prod_{j=1}^k \left(1 + \frac{\rho}{j(1 + \log j)}\right)\right\}
=
\sum_{j=1}^k \log\left(1+ \frac{\rho}{j(1 + \log j)}\right)
\leq
\rho \sum_{j=1}^k \frac{1}{j(1 + \log j)}. \label{eq:logprod}
\end{equation}
The summand on the right-hand side of \eqref{eq:logprod} decreases monotonically to zero, so there is some constant $\tilde\gamma$ such
that
\begin{equation}
\rho \sum_{j=1}^k \frac{1}{j(1 + \log j)}
\leq
\rho\left(\tilde \gamma + \int_1^{k} \frac{1}{x(1 + \log x)}\ dx\right)
=
\rho(\tilde\gamma + \log(1 + \log k)).
\end{equation}
Therefore
\begin{equation}
\prod_{j=1}^k \left(1 + \frac{\rho}{j(1 + \log j)}\right) \leq \exp(\tilde \gamma \rho)(1 + \log k)^\rho \in O((1 + \log k)^\rho).
\end{equation}
\end{proof}

Now the main proof.

\begin{proof}
Let $\ell_k$ be the number of backtracking steps taken at iteration $k$ on line \ref{line:strengthen-metric}.  By \cref{prop:backtrack}, this number is finite, and
$\MM_k = \tau^{\ell_k} \widehat\MM_k$ and $\Delta_{k-1,k} = \Delta(\MM_{k-1},\MM_k) \leq \tau^{\ell_k} \Delta(\MM_{k-1},\widehat\MM_k)$.
Therefore
\begin{equation}\label{eq:delta-bound}
    \Delta_{0,k} = \prod_{j=1}^{k} \Delta_{j-1,j} \leq \prod_{j=1}^k \tau^{\ell_j} \Delta(\MM_{j-1},\widehat\MM_j) \leq \left(\prod_{j=1}^k \tau^{\ell_j}\right)\left(\prod_{j=1}^k d_j\right).
\end{equation}
By \cref{lem:d-prod}, the second term on the right is $O((\log k)^\rho)$.  We now bound the first term.

Using the bound \eqref{eq:backtrack-bound} from \cref{prop:backtrack}, we have
\begin{equation}\label{eq:bound-tsk}
\tau^{\ell_k} \leq \max\left\{1, \frac{\tau L}{\sigma \lambda_{\min} (\widehat\MM_k)}\right\}.
\end{equation}
We can relate $\lambda_{\min}(\widehat\MM_k)$ to $\lambda_{\min}(\MM_{k-1})$,
\[
\lambda_{\min}(\widehat\MM_k) = \inf_{\xx: \|\xx\| = 1} \|\xx\|_{\widehat\MM_k}^2
\geq
\inf_{\xx: \|\xx\| = 1} \Delta(\MM_{k-1},\widehat\MM_k)^{-1} \|\xx\|_{\MM_{k-1}}^2
\geq
d_k^{-1}
\lambda_{\min}(\MM_{k-1}),
\]
which implies $\lambda_{\min}(\MM_k) \geq \frac{\tau^{\ell_k}}{d_k}\lambda_{\min}(\MM_{k-1})$.
Applying this bound recursively to \eqref{eq:bound-tsk}, we get
\[
\tau^{\ell_k} \leq \max\left\{1, \frac{\tau L \prod_{j=1}^k d_j}{\sigma \lambda_{\min} (\MM_0)\prod_{j=1}^{k-1} \tau^{\ell_j}}\right\},
\]
and so
\[
\prod_{j=1}^k\tau^{\ell_j} \leq \max\left\{\prod_{j=1}^{k-1} \tau^{\ell_j}, \frac{\tau L }{\sigma \lambda_{\min} (\MM_0)}\prod_{j=1}^k d_j\right\}.
\]
This recursively simplifies to
\[
\prod_{j=1}^k\tau^{\ell_j} \leq
\max\left\{1, \frac{\tau L }{\sigma \lambda_{\min} (\MM_0)}\prod_{j=1}^k d_j \right\} \in O((\log k)^\rho).
\]
Therefore the product in the bound on the right-hand side of \eqref{eq:delta-bound} is $O((\log k)^{2\rho})$.

\end{proof}

\subsection{Proof of \texorpdfstring{\cref{thm:pointwise-convergence}}{residual convergence} (pointwise convergence of a subsequence)}
\label{proof:pointwise-convergence}

For this proof, we need to relate the residual $r(\xx_{k+1},\uu_{k+1})$ to the norm of $(\xx_{k+1}- \zz_k)$.

\begin{lemma}\label{lem:residual-bound}
Let assumptions (\ref{eq:proper},\ref{eq:L-lipschitz},\ref{eq:u0-in-partial-h-z0}) hold.  Then for each iteration $k\geq 0$ of \vmtos
\begin{equation}\label{eq:r-xz-bound}
r(\xx_{k+1},\uu_{k+1}) 
\leq
\sqrt{C_r\Delta_{0,k}} \| \xx_{k+1} - \zz_k \|_{\MM_k},
\end{equation}
where $C_r := \max\{2\|\MM_0\|,\|\MM_0^{-1}\|\} + 2L^2\|\MM_0^{-1}\|$.
\end{lemma}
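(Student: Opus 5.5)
The plan is to start from the residual bound \eqref{eq:first-r-bound} and rewrite both of its terms in terms of $\xx_{k+1}-\zz_k$. The bridge will be firm nonexpansiveness of the $\MM_k$-proximal map of $h$. To use it, I need a representation of $\zz_k$ as a proximal point in the same metric $\MM_k$ used at iteration $k$.

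\emph{Step 1: represent $\zz_k$ as a proximal point.} I will show $\uu_k\in\partial h(\zz_k)$ for every $k$. For $k=0$ this is \eqref{eq:u0-in-partial-h-z0}. For $k\geq1$, apply \eqref{eq:prox-to-subdiff} to \eqref{eq:tos-iter-prox-h} at iteration $k-1$ and use \eqref{eq:tos-iter-dual-update}. This gives
\[
\MM_{k-1}(\xx_k+\MM_{k-1}^{-1}\uu_{k-1}-\zz_k)=\uu_k\in\partial h(\zz_k).
\]
Hence, for the current metric $\MM_k$, \eqref{eq:prox-to-subdiff} gives $\zz_k=\prox_{\MM_k,h}(\zz_k+\MM_k^{-1}\uu_k)$. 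The metric mismatch between iterations is harmless, because the inclusion $\uu_k\in\partial h(\zz_k)$ does not depend on a metric.

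\emph{Step 2: firm nonexpansiveness in the $\MM_k$-norm.} The map $\prox_{\MM,h}$ is the resolvent of $\MM^{-1}\partial h$. That operator is monotone in the $\MM$ inner product, so the resolvent is firmly nonexpansive in $\|\cdot\|_{\MM}$. Apply this to the inputs $\yy_1=\xx_{k+1}+\MM_k^{-1}\uu_k$ and $\yy_0=\zz_k+\MM_k^{-1}\uu_k$, whose difference is $\xx_{k+1}-\zz_k$. The residual difference is $(\yy_1-\zz_{k+1})-(\yy_0-\zz_k)=\xx_{k+1}-\zz_{k+1}$. Therefore
\[
\|\zz_{k+1}-\zz_k\|_{\MM_k}^2+\|\xx_{k+1}-\zz_{k+1}\|_{\MM_k}^2\leq\|\xx_{k+1}-\zz_k\|_{\MM_k}^2 .
\]

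\emph{Step 3: assemble.} By \eqref{eq:tos-iter-dual-update}, $\MM_k^{-1}(\uu_k-\uu_{k+1})=\zz_{k+1}-\xx_{k+1}$. In the first term of \eqref{eq:first-r-bound}, use $(a+b)^2\le 2a^2+2b^2$, $L$-smoothness, and $\|\MM_k\va\|^2\le\|\MM_k\|\|\va\|_{\MM_k}^2$. The second term is bounded using $\|\va\|^2\le\|\MM_k^{-1}\|\|\va\|_{\MM_k}^2$. Together these give
\[
r^2\leq \max\{2\|\MM_k\|,\|\MM_k^{-1}\|\}\bigl(\|\zz_k-\zz_{k+1}\|_{\MM_k}^2+\|\zz_{k+1}-\xx_{k+1}\|_{\MM_k}^2\bigr)+2L^2\|\MM_k^{-1}\|\|\xx_{k+1}-\zz_k\|_{\MM_k}^2 .
\]
Step 2 then bounds the parenthesis by $\|\xx_{k+1}-\zz_k\|_{\MM_k}^2$.

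\emph{Step 4: return to $\MM_0$.} From \eqref{eq:big-delta-def} and \eqref{eq:delta-def}, $\MM_k\preceq\Delta_{0,k}\MM_0$ and $\MM_k^{-1}\preceq\Delta_{0,k}\MM_0^{-1}$. So $\|\MM_k\|\le\Delta_{0,k}\|\MM_0\|$ and $\|\MM_k^{-1}\|\le\Delta_{0,k}\|\MM_0^{-1}\|$, and substituting yields $r^2\le C_r\Delta_{0,k}\|\xx_{k+1}-\zz_k\|_{\MM_k}^2$.

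I expect the main obstacle to be Step 1, and specifically recognising that $\zz_k$ is a fixed point of the proximal step in the new metric $\MM_k$ rather than $\MM_{k-1}$. Assumption \eqref{eq:u0-in-partial-h-z0} exists precisely to cover the base case $k=0$. The remaining steps are routine norm comparisons.
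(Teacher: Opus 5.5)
Your proposal is correct and is essentially the paper's proof. The paper obtains $\|\zz_{k+1}-\zz_k\|_{\MM_k}^2+\|\xx_{k+1}-\zz_{k+1}\|_{\MM_k}^2\le\|\xx_{k+1}-\zz_k\|_{\MM_k}^2$ from the cosine identity and monotonicity of $\partial h$ at $(\zz_k,\uu_k)$ and $(\zz_{k+1},\uu_{k+1})$, using \eqref{eq:u0-in-partial-h-z0} for $k=0$; this is exactly what your firm-nonexpansiveness step encodes once $\zz_k$ is written as a fixed point of $\prox_{\MM_k,h}$, and the norm comparisons and the passage to $\MM_0$ through $\Delta_{0,k}$ are the same (the paper derives \eqref{eq:first-r-bound} itself inside this proof, so for self-containedness you should state the two subdifferential inclusions rather than just cite it).
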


\begin{proof}
\Cref{eq:tos-iter-prox-g,eq:tos-iter-prox-h,eq:tos-iter-dual-update} imply the inclusions
\begin{align}
\MM_k(\zz_k - \zz_{k+1}) + \nabla f(\xx_{k+1}) - \nabla f(\zz_k) &\in \nabla f(\xx_{k+1}) + \partial g(\xx_{k+1}) + \uu_{k+1} = \partial_{\xx}\LL(\xx_{k+1},\uu_{k+1}), \\
\MM_k^{-1}(\uu_k - \uu_{k+1}) &\in \partial h^*(\uu_{k+1}) - \xx_{k+1} = \partial_{\uu}(-\LL(\xx_{k+1},\uu_{k+1})).
\end{align}
Therefore the norms of the included vectors are a bound on the residual,
\[
\begin{aligned}
r(\xx_{k+1},\uu_{k+1})^2 &\leq
\| \MM_k(\zz_k - \zz_{k+1}) + \nabla f(\xx_{k+1}) - \nabla f(\zz_k)\|^2 + \|\MM_k^{-1}(\uu_k - \uu_{k+1})\|^2.
\end{aligned}
\]
Using the inequality $\|\va + \bb\|^2 \leq 2 (\|\va\|^2 + \|\bb\|^2)$, we separate the contribution of gradient terms and then bound them with the Lipschitz constant,
\[
\begin{aligned}
r(\xx_{k+1},\uu_{k+1})^2
&\leq 2(\|\MM_k(\zz_k - \zz_{k+1})\|^2 + \|\nabla f(\xx_{k+1}) - \nabla f(\zz_k)\|^2) + \|\MM_k^{-1}(\uu_k - \uu_{k+1})\|^2 \\
&\leq 2(\|\MM_k(\zz_k - \zz_{k+1})\|^2 + L^2\|\xx_{k+1} - \zz_k\|^2) + \|\MM_k^{-1}(\uu_k - \uu_{k+1})\|^2.
\end{aligned}
\]
We apply the primal-dual relationship $\uu_{k+1} - \uu_k = \MM_k(\xx_{k+1} - \zz_{k+1})$ and the inequality $\|\MM \va \|^2 \leq \|\MM\| \|\va\|_{\MM}^2$ to see
\[
\begin{aligned}
r(\xx_{k+1},\uu_{k+1})^2
&\leq 2(\|\MM_k\|\|\zz_k - \zz_{k+1}\|_{\MM_k}^2 + L^2\|\MM_k^{-1}\|\|\xx_{k+1} - \zz_k\|_{\MM_k}^2) + \|\MM_k^{-1}\|\|\xx_{k+1} - \zz_{k+1}\|_{\MM_k}^2.
\end{aligned}
\]
The definition of $\Delta(\MM,\NN)$ in \eqref{eq:big-delta-def} implies that
$\Delta(\MM,\NN) = \Delta(\MM^{-1},\NN^{-1})$ and that $\|\NN\| \leq \Delta(\MM,\NN)\|\MM\|$.  Therefore we can bound the leading coefficients by terms in $\MM_0$,
\begin{equation}\label{eq:bound-r}
r(\xx_{k+1},\uu_{k+1})^2
\leq \Delta_{0,k}(2\|\MM_0\|\|\zz_k - \zz_{k+1}\|_{\MM_k}^2 + 2L^2\|\MM_0^{-1}\|\|\xx_{k+1} - \zz_k\|_{\MM_k}^2 + \|\MM_0^{-1}\|\|\xx_{k+1} - \zz_{k+1}\|_{\MM_k}^2).
\end{equation}
By the cosine identity and the primal-dual relationship
\[
\begin{aligned}
\|\xx_{k+1} - \zz_{k+1}\|_{\MM_k}^2 + \|\zz_k - \zz_{k+1}\|_{\MM_k}^2
&=\|\xx_{k+1} - \zz_k\|_{\MM_k}^2 - 2\langle \MM_k(\xx_{k+1} - \zz_{k+1}), \zz_{k+1} - \zz_k \rangle
\\
&=\|\xx_{k+1} - \zz_k\|_{\MM_k}^2 -  2\langle \uu_{k+1} - \uu_k, \zz_{k+1} - \zz_k \rangle.
\end{aligned}
\]
Because the condition $\uu_k \in \partial h(\zz_k)$ holds for every $k \geq 1$ as a consequence of \eqref{eq:tos-iter-prox-h} and \eqref{eq:tos-iter-dual-update}, and for $k=0$ by assumption \eqref{eq:u0-in-partial-h-z0}, we have
$\langle \uu_{k+1} - \uu_k, \zz_{k+1} - \zz_k \rangle \geq 0$ by \eqref{eq:monotone}, so
\begin{equation}\label{eq:bound-xz}
\|\xx_{k+1} - \zz_{k+1}\|_{\MM_k}^2 + \|\zz_k - \zz_{k+1}\|_{\MM_k}^2
\leq \|\xx_{k+1} - \zz_k\|_{\MM_k}^2.
\end{equation}
Combining \eqref{eq:bound-r} and \eqref{eq:bound-xz} gives us our final bound,
\[
r(\xx_{k+1},\uu_{k+1})^2
\leq \Delta_{0,k}
(\max\{2\|\MM_0\|,\|\MM_0^{-1}\|\} + 2L^2\|\MM_0^{-1}\|)\|\xx_{k+1} - \zz_k\|_{\MM_k}^2.
\]
\end{proof}

Now the main proof.

\begin{proof}
Let $(\xx^\star, \uu^\star)\in S(\LL)$ be chosen.  By \cref{cor:keyrecursive-solution},
\[
    \|\zz_{k+1} - \xx^\star\|_{\MM_k}^2 + \|\uu_{k+1} - \uu^\star\|_{\MM_k^{-1}}^2 \leq
    \|\zz_k - \xx^\star\|_{\MM_k}^2 + \|\uu_k - \uu^\star\|_{\MM_k^{-1}}^2 - (1 - \sigma)\|\xx_{k+1} - \zz_k\|_{\MM_k}^2.
\]
By definition \eqref{eq:delta-def}, for $k \geq 1$,
\[
    \|\zz_{k+1} - \xx^\star\|_{\MM_k}^2 + \|\uu_{k+1} - \uu^\star\|_{\MM_k^{-1}}^2 \leq
    \Delta_{k-1,k}(\|\zz_k - \xx^\star\|_{\MM_{k-1}}^2 + \|\uu_k - \uu^\star\|_{\MM_{k-1}^{-1}}^2) - (1 - \sigma)\|\xx_{k+1} - \zz_k\|_{\MM_k}^2.
\]
Applying this inequality recursively, we get
\[
    \|\zz_{k+1} - \xx^\star\|_{\MM_k}^2 + \|\uu_{k+1} - \uu^\star\|_{\MM_k^{-1}}^2 \leq
    \Delta_{0,k}(\|\zz_0 - \xx^\star\|_{\MM_{0}}^2 + \|\uu_0 - \uu^\star\|_{\MM_{0}^{-1}}^2) - (1 - \sigma)\sum_{j=0}^k\|\xx_{j+1} - \zz_j\|_{\MM_j}^2,
\]
which implies
\[
(1 - \sigma) \sum_{j=0}^k \|\xx_{j+1} - \zz_j\|_{\MM_j}^2 \leq \Delta_{0,k}(\|\zz_0 - \xx^\star\|_{\MM_{0}}^2 + \|\uu_0 - \uu^\star\|_{\MM_0^{-1}}^2).
\]
Bounding the left-hand side using the smallest element in the series, we have
\[
\min_{0\leq j\leq k} \|\xx_{j+1} - \zz_j\|_{\MM_j}^2 \leq \frac{1}{k+1} \frac{\Delta_{0,k}}{1-\sigma}(\|\zz_0 - \xx^\star\|_{\MM_{0}}^2 + \|\uu_0 - \uu^\star\|_{\MM_0^{-1}}^2).
\]
Combining this with \eqref{eq:r-xz-bound}, we get the final bound
\[
  \min_{0\leq j\leq k} r(\xx_{j+1}, \uu_{j+1})^2
  \leq \Delta_{0,k}^2\frac{C_r}{1-\sigma}\frac{1}{k+1}(\|\zz_0 - \xx^\star\|_{\MM_0}^2 +
  \|\uu_0 - \uu^\star\|_{\MM_0^{-1}}^2).
\]
\end{proof}

\subsection{Proof of \texorpdfstring{\cref{thm:linear-convergence}}{linearconv} (convergence with strong convexity)}\label{proof:linear-convergence}

(This proof closely follows the linear convergence proof in \citet{pedregosa2018adaptive}.)

\begin{proof}
We first note that assumption \eqref{eq:metric-double-bounds} implies the following inequalities:
\begin{align}
c \|\vv\|_{\MM_k^{-1}}^2 \leq c \|\MM_k^{-1}\|_2 \|\vv\|_2^2 \leq \|\vv\|_2^2,\label{eq:muf-bound}\\
C^{-1}\|\yy\|_{\MM_k}^2 \leq C^{-1}\|\MM_k\|_2 \|\yy\|_2^2 \leq \|\yy\|_2^2.\label{eq:L-bound}
\end{align}

With \eqref{eq:solutions-exist}, the strong convexity of $f$ and $h^*$ implies the saddle point is unique, $S(\LL) = \{(\xx^\star, \uu^\star)\}$.
When \cref{lem:keyrecursive} is applied to $(\xx^\star,\uu^\star)$ we obtain the inequality
\begin{equation}
\begin{split}
     2(\mathcal{L}(\xx_{k+1},\uu^\star) - \mathcal{L}(\xx^\star,\uu_{k+1})) + &
     \|\zz_{k+1}-\xx^\star\|_{\MM_{k}}^2 + \|\uu_{k+1}-\uu^\star\|_{\MM_k^{-1}}^2 + \mu_h\|\uu_{k+1}-\uu^\star\|_2^2 \leq \\
     &\|\zz_k -\xx^\star \|_{\MM_k}^2 + \|\uu_k-\uu^\star\|_{\MM_{k}^{-1}}^2 - \mu_f\|\zz_k-\xx^\star\|_2^2.
\end{split}     
\end{equation}
The saddle point gap is non-negative \citep[\S 4.1]{hiriart2013convex}, so that term can be dropped, hence
\begin{equation}\label{theineq}
     \|\zz_{k+1}-\xx^\star\|_{\MM_{k}}^2 + \|\uu_{k+1}-\uu^\star\|_{\MM_k^{-1}}^2 +  \mu_h\|\uu_{k+1}-\uu^\star\|_2^2 \leq
     \|\zz_k -\xx^\star\|_{\MM_k}^2 + \|\uu_k-\uu^\star\|_{\MM_{k}^{-1}}^2   - \mu_f\|\zz_k-\xx^\star\|_2^2.
\end{equation}
The inequality \eqref{eq:muf-bound} implies
\begin{align}
(1+ c \mu_h) \|\uu_{k+1}-\uu^\star\|_{\MM_k^{-1}}^2 &= \|\uu_{k+1}-\uu^\star\|_{\MM_k^{-1}}^2 + c\mu_h\|\uu_{k+1}-\uu^\star\|_{\MM_k^{-1}}^2 \\
 &\leq \|\uu_{k+1}-\uu^\star\|_{\MM_k^{-1}}^2 + \mu_h \|\uu_{k+1}-\uu^\star\|_2^2.
\end{align}
Likewise, the inequality \eqref{eq:L-bound} implies
\begin{align}
     \|\zz_k -\xx^\star\|_{\MM_k}^2  - \mu_f\|\zz_k-\xx^\star\|_2^2 &\leq 
     \|\zz_k-\xx^\star\|_{\MM_k}^2 - C^{-1}\mu_f\|\zz_k -\xx^\star\|_{\MM_k}^2  \\
     &= (1-C^{-1}\mu_f)\|\zz_k -\xx^\star\|_{\MM_k}^2.
\end{align}
Combining these inequalities with \eqref{theineq} gives
\[
\begin{aligned}
&\|\zz_{k+1} - \xx^\star\|_{\MM_k}^2 + (1 + c\mu_h)\|\uu_{k+1}-\uu^\star \|_{\MM_k^{-1}}^2
\\
&\qquad\leq (1 - C^{-1}\mu_f)\|\zz_k-\xx^\star\|_{\MM_k}^2 + \|\uu_k - \uu^\star\|_{\MM_k^{-1}}^2\\
&\qquad\leq\max\{1-C^{-1}\mu_f,(1+c\mu_h)^{-1}\}\,E_k\\
&\qquad=(1-\min\{\phi,\theta\})\,E_k,
\end{aligned}
\]
where
$\phi = C^{-1}\mu_f$ and $\theta = c\mu_h / (1 + c \mu_h)$.  Converting to $\MM_{k+1}$ using \eqref{eq:big-delta-def} gives
\[
E_{k+1} \leq \Delta_{k,k+1} (1 - \min\{\phi,\theta\})\, E_k.
\]
Iterating gives
\[
E_{k+1} \leq \left(\prod_{j=0}^{k} \Delta_{j,j+1}\right)(1 - \min\{\phi,\theta\})^{k+1}
E_0 = \Delta_{0,k+1} (1 - \min\{\phi,\theta\})^{k+1} E_0.
\]
\end{proof}

\subsection{Proof of \texorpdfstring{\cref{lem:bb-closeness}}{bbcloseness} (bound for Barzilai--Borwein updates)}\label{proof:bb-closeness}

\begin{proof}
For $\omega>0$, the objective in \eqref{eq:bb-obj} is coercive and strongly
convex, so it has a unique minimizer over the nonempty closed convex set
$\mathcal{C}$.  Let $\DD_k:=\widehat\MM_{k+1}-\MM_k$.  If $\DD_k=0$, the bound
is immediate.  Otherwise, writing
$\bs{r}_k:=\yy_k-\MM_k\vs_k$, the minimization problem is equivalent to
\[
\DD_k = \argmin_{\DD \in \mathcal{C} - \MM_k} \tfrac{1}{2} \|\DD \vs_k - \bs{r}_k\|_2^2 + \tfrac{\omega}{2}\|\DD\|_F^2.
\]
Since $\MM_k \in \mathcal{C}$, testing the first-order optimality condition at the feasible point $\DD = 0$ gives
\[
\langle (\DD_k \vs_k - \bs{r}_k)\vs_k^T + \omega \DD_k,\; 0 - \DD_k \rangle_F \geq 0,
\]
i.e., $\omega\|\DD_k\|_F^2 + \|\DD_k \vs_k\|_2^2 \leq \langle \bs{r}_k \vs_k^T, \DD_k\rangle_F \leq \|\bs{r}_k\|_2 \|\vs_k\|_2 \|\DD_k\|_F$ by Cauchy--Schwarz.  Dropping $\|\DD_k \vs_k\|_2^2$ and dividing by $\omega\|\DD_k\|_F$ gives
\[
\|\DD_k\|_F \leq \omega^{-1} \|\yy_k - \MM_k \vs_k\|_2 \|\vs_k\|_2
\leq \omega^{-1}(\|\yy_k\|_2  + \|\MM_k \vs_k\|_2) \|\vs_k\|_2
\leq \omega^{-1}(L + \|\MM_k\|_2) \|\vs_k\|_2^2,
\]
using $\|\yy_k\|_2 \leq L\|\vs_k\|_2$ ($L$-smoothness) in the last step.
Define $\eta_k:=\|\MM_k^{-1/2}\DD_k\MM_k^{-1/2}\|_2$.  The preceding estimate gives
\[
\eta_k\leq\frac{\|\DD_k\|_F}{\lambda_{\min}(\MM_k)}
\leq\frac{L+\|\MM_k\|_2}{\omega\lambda_{\min}(\MM_k)}\|\vs_k\|^2<1.
\]
Thus
\[
(1-\eta_k)\MM_k\preceq\widehat\MM_{k+1}\preceq(1+\eta_k)\MM_k,
\]
and since $1+\eta_k\leq(1-\eta_k)^{-1}$, the definition of $\Delta$ yields
\[
\Delta(\MM_k,\widehat\MM_{k+1})\leq(1-\eta_k)^{-1}
\leq \left(1 - \frac{L + \|\MM_k\|_2}{\omega\lambda_{\min}(\MM_k)}\|\vs_k\|^2\right)^{-1}.
\]
\end{proof}

\subsection{Proof of \texorpdfstring{\cref{cor:truly-linear}}{the fixed-regularization corollary} (removing the polylogarithmic factor)}\label{proof:truly-linear}

\begin{proof}
By \cref{cor:quasilinear} and equivalence of the fixed $\MM_0$ norm with the Euclidean norm,
\[
\|\vs_k\|=\|\zz_{k+1}-\zz_k\|\in O((\log k)^\rho\beta^k),
\qquad\sum_{k=0}^{\infty}\|\vs_k\|^2<\infty.
\]
For each $k$, the feasible set in \eqref{eq:bb-obj-with-bounds} is closed and convex and contains $\MM_k$.  Apply \cref{lem:bb-closeness} to that set, and write $q_k:=\Delta(\MM_k,\widehat\MM_{k+1})$.  Since \eqref{eq:metric-double-bounds} bounds the coefficient in the lemma by $A:=(L+C)/(\omega c)$, its small-step hypothesis holds for all sufficiently large $k$.  For such $k$ with $A\|\vs_k\|^2\leq1/2$,
\[
0\leq q_k-1\leq\frac{A\|\vs_k\|^2}{1-A\|\vs_k\|^2}
\leq2A\|\vs_k\|^2.
\]
Hence $\sum_k(q_k-1)<\infty$ and
\[
Q:=\prod_{k=0}^{\infty}q_k<\infty.
\]

It remains to account for backtracking.  Let $\ell_{k+1}$ be the number of backtracking steps applied to the proposal $\widehat\MM_{k+1}$, so $\MM_{k+1}=\tau^{\ell_{k+1}}\widehat\MM_{k+1}$.  Since $\widehat\MM_{k+1}\succeq q_k^{-1}\MM_k$, induction gives, for every $N\geq1$,
\[
\MM_N\succeq
\frac{\tau^{\sum_{k=0}^{N-1}\ell_{k+1}}}{\prod_{k=0}^{N-1}q_k}\MM_0.
\]
Using $\MM_0\succeq c\II$ and $\MM_N\preceq C\II$ from \eqref{eq:metric-double-bounds},
\[
\tau^{\sum_{k=0}^{N-1}\ell_{k+1}}
\leq\frac{C}{c}\prod_{k=0}^{N-1}q_k
\leq\frac{C}{c}Q.
\]
Because $\tau>1$ and the $\ell_k$ are nonnegative integers, only finitely many backtracking steps occur.  In particular,
\[
\Delta_{0,N}
\leq\prod_{k=0}^{N-1}\tau^{\ell_{k+1}}q_k
\leq\frac{C}{c}Q^2.
\]
Substituting this bound into \cref{thm:linear-convergence} gives $E_N=O(\beta^{2N})$ and hence $d((\zz_N,\uu_N))=O(\beta^N)$.
\end{proof}

\subsection{Proof of \texorpdfstring{\cref{lem:practical-bb-bounds}}{uniform bounds for practical BB clipping}}\label{proof:practical-bb-bounds}

\begin{proof}
For a nonzero secant, strong convexity and cocoercivity of the gradient give
\[
\langle\vs_k,\yy_k\rangle\geq\mu_f\|\vs_k\|^2,
\qquad
\|\yy_k\|^2\leq L\langle\vs_k,\yy_k\rangle.
\]
Together with Cauchy--Schwarz, these inequalities imply
\[
\mu_f\leq a_k
=\frac{\langle\vs_k,\yy_k\rangle}{\|\vs_k\|^2}
\leq b_k
=\frac{\|\yy_k\|^2}{\langle\vs_k,\yy_k\rangle}
\leq L.
\]
The retained intervals for zero secants therefore also satisfy
\[
0<\underline a\leq a_k\leq b_k\leq\overline b<\infty,
\qquad
\underline a:=\min\{\mu_f,a_{\mathrm{init}}\},\quad
\overline b:=\max\{L,b_{\mathrm{init}}\}.
\]
Recall that $\tau>1$ is the backtracking factor and $\sigma\in(0,1)$ is the sufficient decrease parameter in \vmtos.
For an s.p.d.\ matrix, write $\kappa(\MM):=\lambda_{\max}(\MM)/\lambda_{\min}(\MM)$, and define
\[
\begin{aligned}
R&:=\max\{\kappa(\MM_0),\overline b/\underline a\},\\
c&:=\min\{\lambda_{\min}(\MM_0),\underline a\},\\
C&:=\max\{\lambda_{\max}(\MM_0),\overline b,\tau LR/\sigma\}.
\end{aligned}
\]
We show that these constants give the claimed bounds on accepted metrics.

First consider the condition number.  Write $t_k^i:=\operatorname{clip}_{[a_k,b_k]}(p_k^i)$ and
\[
T_d(m,t):=\operatorname{clip}_{[m/d,\,dm]}(t),\qquad d\geq1.
\]
The map $T_d$ is nondecreasing in both arguments and satisfies $T_d(rm,rt)=rT_d(m,t)$ for $r>0$.
Suppose $\kappa(\MM_k)\leq R$.  For every pair of coordinates $i,j$, we have
$m_k^i\leq Rm_k^j$ and $t_k^i\leq\overline b\leq R\underline a\leq Rt_k^j$.  Hence
\[
\widehat m_{k+1}^i
=T_{d_{k+1}}(m_k^i,t_k^i)
\leq T_{d_{k+1}}(Rm_k^j,Rt_k^j)
=R\widehat m_{k+1}^j.
\]
Thus $\kappa(\widehat\MM_{k+1})\leq R$.  Backtracking multiplies all entries by the same positive scalar, preserving the condition number.  Induction gives $\kappa(\MM_k)\leq R$ for every $k$.

Since $[m/d,dm]$ contains $m$, the value $T_d(m,t)$ lies between $m$ and $t$.  Therefore
\[
\min\{m_k^i,\underline a\}\leq\widehat m_{k+1}^i
\leq\max\{m_k^i,\overline b\}.
\]
Backtracking can only increase the diagonal entries, so induction gives $\lambda_{\min}(\MM_k)\geq c$.
If no backtracking occurs at iteration $k+1$, then
\[
\lambda_{\max}(\MM_{k+1})\leq\max\{\lambda_{\max}(\MM_k),\overline b\}.
\]
If backtracking occurs at iteration $k+1$, the last rejected metric is $\MM_{k+1}/\tau$.  By $L$-smoothness, any trial metric with smallest eigenvalue at least $L/\sigma$ satisfies the sufficient decrease condition (see \cref{prop:backtrack}).  Thus
\[
\lambda_{\min}(\MM_{k+1})<\frac{\tau L}{\sigma},
\qquad
\lambda_{\max}(\MM_{k+1})
\leq R\lambda_{\min}(\MM_{k+1})
<\frac{\tau LR}{\sigma}.
\]
Combining the two cases with the initial bound yields $\lambda_{\max}(\MM_k)\leq C$ for every $k$.  Together with the lower bound, this shows that the accepted metrics satisfy \eqref{eq:metric-double-bounds}.
\end{proof}

\section{Diagonal Metric Implementation}\label{supp:diagvm}

\begin{remark}[BB clipping alone does not enforce the closeness bound]\label{rem:bb-clipping-counterexample}
Clipping each $p_k^i$ only to the BB interval $[a_k,b_k]$ need not satisfy the closeness bound in line \ref{line:adapt-metric}.  Take
\[
\mm_k=(1,2),\qquad \vs_k=(1,1),\qquad \yy_k=(2,1),
\qquad \omega_k=\tfrac1{10},\qquad d_{k+1}=\tfrac32.
\]
These secants arise from the convex function $f(\zz)=z_1^2+\tfrac12z_2^2$.
The BB interval is $[3/2,5/3]$, and the unconstrained minimizer is $(p_k^1,p_k^2)=(21/11,12/11)$.
Clipping $\boldsymbol{p}_k$ only to the BB interval gives $(5/3,3/2)$, for which
\[
\Delta\!\left(\MM_k,\mathrm{diag}(5/3,3/2)\right)
=\frac53>\frac32=d_{k+1}.
\]
The closeness intervals are $[2/3,3/2]$ and $[4/3,3]$.  Applying the second clipping step in \eqref{eq:sequential-clipping} returns $(3/2,3/2)$, which satisfies both the BB and closeness bounds.

The two bounds may also be incompatible.  If $m_k^i=10$, $d_{k+1}=1.25$, and $[a_k,b_k]=[2,4]$, the closeness interval is $[8,12.5]$.
These intervals are disjoint, so no value satisfies both bounds.
Our proposed sequential clipping rule returns $8$, satisfying the closeness bound while lying outside the BB interval.
\end{remark}

\paragraph{Weighted proximal operators}
For $\MM=\mathrm{diag}(\mm)$ with $\mm>0$, we compute:
\begin{itemize}
\item \emph{Separable functions} ($\ell_1$, box constraints): compute the weighted prox coordinatewise.
\item \emph{1D total variation}: Johnson's dynamic program solves $\min_{\zz} \tfrac{1}{2}\sum_i m_i (z_i-x_i)^2+\lambda\sum_i|z_{i+1}-z_i|$ in $O(n)$ \citep{johnson2013dynamic,barbero2018modular}.  For a nonnegativity constraint, clip the result at zero.  This gives the constrained prox by \citet[Theorem~1]{yu2013decomposition} in the $\MM$-inner product: the projection $P_+$ preserves adjacent ordering, so $\partial\,\mathrm{TV}(\xx)\subseteq\partial\,\mathrm{TV}(P_+(\xx))$.
\item \emph{Group $\ell_2$ norms}: for $\lambda>0$, the weighted prox of $\lambda\|\cdot\|$ on a group $G$ is zero if and only if $\sum_{i\in G}(m_i x_i/\lambda)^2\leq1$.  Otherwise, $z_i=x_i t/(t+\lambda/m_i)$, where $t=\|\zz_G\|>0$ solves $\sum_{i\in G}x_i^2/(t+\lambda/m_i)^2=1$.  We find this unique root by safeguarded root finding.  For $\lambda=0$, $\zz_G=\xx_G$.  A metric constant within each group gives block soft thresholding \citep[\S 3.2]{park2020variable_arxiv}.
\item \emph{Unit simplex}: compute the weighted projection by sorted breakpoint search \citep{kiwiel2008breakpoint}.
\item \emph{Halfspace}: the weighted projection has a closed form \citep[Example~29.20]{Bauschke2017}.
\end{itemize}

\end{document}